\theoremstyle{plain}
\newtheorem{theorem}{Theorem}[section]
\newtheorem{lemma}[theorem]{Lemma}
\newtheorem{prop}[theorem]{Proposition}
\newtheorem{corollary}[theorem]{Corollary} 
\newtheorem{remark}[theorem]{Remark}
\newtheorem{definition}[theorem]{Definition}
\theoremstyle{definition}
\theoremstyle{remark}
\numberwithin{equation}{section}
\newcommand{\di}{d} 
\newcommand{\D}{\mathrm{D}} 
\newcommand{\RR}{\mathbb{R}}
\newcommand{\NN}{\mathbb{N}}
\newcommand{\ZZ}{\mathbb{Z}}
\renewcommand{\SS}{\mathbb{S}}
\newcommand{\II}{I\hspace{-0.2em}I}
\newcommand{\llc}{\,\text{\Large{\reflectbox{$\lrcorner$}}}}
\renewcommand{\d}{\partial}
\renewcommand{\div}{\,\mathrm{div}}
\newcommand{\dint}{\,\mathrm{d}}
\newcommand{\veps}{\varepsilon}
\newcommand{\nn}{\nonumber}
\newcommand{\wto}{\rightharpoonup}
\newcommand{\sbs}{\subset}
\newcommand{\loc}{\mathrm{loc}}
\newcommand{\supp}{\mathrm{supp}}
\newcommand{\cA}{\mathcal{A}}
\newcommand{\cD}{\mathcal{D}}
\newcommand{\cV}{\mathcal{V}}
\newcommand{\Lip}{\mathrm{Lip}}
\newcommand{\TV}{{_{\mathrm{TV}}}}
\newcommand{\diam}{\mathrm{diam}}
\newcommand{\Radon}{\mathcal{M}}
\newcommand{\Prob}{\mathcal{P}}
\newcommand{\Haus}{\mathcal{H}}
\newcommand{\MM}{\mathbb{M}}
\newcommand{\curr}[1]{T_{#1}}
\newcommand{\encvol}{\,\mathrm{encvol}}
\newcommand{\CH}{\mathrm{CH}}
\newcommand{\Will}{\mathcal{W}}
\newcommand{\adm}{\cA}
\newcommand{\admz}{\cA^0}
\newcommand{\admzk}{\cA_k^0}
\newcommand{\admgk}{\cA_k^g}
\newcommand{\admg}{\cA^g}
\newcommand{\comp}{Q}
\let\TeXchi\chi
\newbox\chibox
\chibox \hbox{\raise\dp0 \box 0 }
\def\chi{\copy\chibox}
\newcommand{\ove}{\overline}
\title[Canham-Helfrich flow of varifolds]{Generalized Minimizing Movements for \\
the varifold Canham-Helfrich flow}
\date{\today}
\author[K. Brazda]
{Katharina Brazda}
\address[K. Brazda]{Faculty of Mathematics, University of Vienna, Oskar-Morgenstern-Platz 1, A-1090 Vienna, Austria
 \& Vienna School of Mathematics (VSM).}
\email{katharina.brazda@univie.ac.at}
\author[M. Kru\v{z}\'{i}k]
{Martin Kru\v{z}\'{i}k}
\address[M. Kru\v{z}\'{i}k]{Institute of Information Theory and Automation, Czech Academy of Sciences,
Pod vod\'arenskou ve\v z\'\i\ 4, CZ-182 00, Prague 8, Czechia \& 
Faculty of Civil Engineering, Czech Technical University, Th\'{a}kurova 7, CZ-166 29, Prague 6, Czechia.}
\email{kruzik@utia.cas.cz}
\author[U. Stefanelli] {Ulisse Stefanelli} 
\address[U. Stefanelli]{Faculty of Mathematics, University of Vienna, 
Oskar-Morgenstern-Platz 1, A-1090 Vienna, Austria,\,\&
Vienna Research Platform on Accelerating Photoreaction Discovery, University of Vienna, W\"ahringerstrasse 17, A-1090 Vienna, Austria,\,\&
Istituto di Matematica Applicata e Tecnologie Informatiche E. Magenes,
via Ferrata 1, I-27100 Pavia, Italy.
}
\email{ulisse.stefanelli@univie.ac.at}
\begin{document}

\begin{abstract}
  The gradient flow of the Canham-Helfrich functional is tackled via the Generalized Minimizing Movements approach. We prove the existence of solutions  in Wasserstein spaces of varifolds,  as well as upper and lower diameter bounds. In the more regular setting of multiply covered  $C^{1,1}$ surfaces, we provide a Li-Yau-type  estimate for the Canham-Helfrich energy and prove the  conservation of multiplicity along the evolution. 
\end{abstract}

\keywords{Canham-Helfrich functional, gradient flow, minimizing movements, curvature varifolds, Wasserstein distance, biological membranes}

\subjclass[2010]{49Q10, 
49Q20, 
49J45, 
53C80, 
92C10
}

\maketitle


\section{Introduction} 

Minimizers of the {\it Canham-Helfrich energy}
\begin{equation}\label{eq:ECH} 
E_\CH(M)=\int_M \Big(\frac{\beta}{2}(H-H_0)^2+\gamma K\Big)\dint\Haus^2
\end{equation}
can be seen as models for the equilibrium shapes of single-phase biological membranes
\cite{Canham:70,Helfrich:73}. 
The membrane is represented by the closed, orientable $C^2$ surface $M$ in $\RR^3$ with $H$ and $K$ respectively denoting its mean and Gauss curvature. The material properties of the membrane material are encoded in the bending rigidities $\beta>0$, $\gamma<0$ and in the spontaneous curvature $H_0\in\RR$ and minimization is usually performed under area and enclosed-volume constraint.

We are interested in studying the {\it gradient flow dynamics} associated to the Canham-Helfrich energy $E_\CH$. This can be expected to schematically describe the dissipative evolution of single-phase biomembranes in a viscous environment, possibly up to convergence to equilibrium over time. Our aim is to tackle such evolution in the weak setting of varifolds, building on the recent varifold approach to the minimization of the Canham-Helfrich energy in \cite{BLS:20}.
Compared with stronger approaches, working with varifolds allows us to overcome the bottleneck represented by possible singularity formation and to obtain a global existence theory. The price to pay for this is the weakness of the evolution notion, as we resort in considering variational solutions in {\sc De Giorgi's}  {\it Generalized Minimizing Movements} (GMMs) sense \cite{AM:95,AmGiSa:08,DeGiorgi:93}.

Before presenting our results in detail, let us sketch a brief review of the  related literature. In the stationary case, the minimization of the Canham-Helfrich energy has been investigated in a number of different settings. 
The case of axisymmetric surfaces is considered by {\sc Choksi, Veneroni, \& Morandotti} \cite{ChoksiMorandottiVeneroni:13,ChVe:13}, also in connection with the onset of two different phases on the surface. 
Minimization in the class of uniform $C^{1,1}$ surfaces for a general class of geometric functionals including $E_\CH$ is analyzed by {\sc Dalphin} \cite{Dalphin:14, Dalphin:18}. Within the so-called parametric approach, the existence of minimizers in the framework of weak immersions is proved by {\sc Mondino \& Scharrer} \cite{MoSc:20}. 
In the weak ambient approach of oriented varifolds one has to record the result by {\sc Eichmann} \cite{Eichmann:20} and that of the already mentioned \cite{BLS:20}. In the latter, the multiphase case is also considered. Eventually, minimization in the setting of generalized Gauss graphs is studied by {\sc Kubin, Lussardi, \& Morandotti}  \cite{KuLuMo:22}. A Li-Yau-type inequality for the Canham-Helfrich energy has been recently obtained by {\sc Rupp \& Scharrer} \cite{RuSc:22}. 

In the evolutionary  case, local existence results for the classical $L^2$ gradient flow of the Canham-Helfrich functional were  obtained by {\sc Kohsaka \& Nagasawa} \cite{KoNa:06} and  {\sc Nagasawa \& Yi} \cite{NaYi:12}, see also {\sc Liu}   \cite{Liu:12} for the estimate of the lifespan of a smooth solution. In the setting of vesicles  in a viscous environment, i.e., in combination with fluid dynamics  surrounding  the volume enclosed by $M$, we   mention the local well-posedness results by {\sc K\"ohne \& Lengeler} \cite{KoLe:18, Lengeler:18} and {\sc Wang, Zhang, \& Zhang} \cite{Zhang}.  Moreover, we  refer to {\sc Elliott \& Stinner} \cite{ElSt:10},  {\sc Barrett, Garcke, \& Nürnberg} \cite{BaGaNu:17,BaGaNu:18, GaNu:21}, and {\sc Elliott \& Hatcher} \cite{ElHa:21} for the finite element approximation of gradient-flow evolution of two-phase biomembranes.

The special case of the {\it Willmore} flow corresponds to the choice $H_0=0$ and has also been specifically considered. We refer to {\sc Simonett} \cite{Simonett:01} for global existence in the vicinity of spheres and {\sc Kuwert \& Sch\"atzle} for local existence in case of small initial energy \cite{KuSc:01} and a lifespan estimate in terms of local initial curvature \cite{KuSc:02}.  A proof of convergence to the sphere for initial Willmore energy smaller than  $8\pi$ can be found in  \cite{KuSc:04}, see also the review \cite{KuSc:12}. 
{\sc Kuwert \& Scheuer} \cite{KS:21} prove stability estimates 
along the flow. The parametric approach to the Willmore flow has been treated by {\sc Palmurella \& Rivi\`ere} \cite{PR:22} who establish global well-posedness for small initial energy of the so called {\it conformal Willmore flow} and convergence to the sphere. The Willmore flow in the vicinity of the torus has been considered by {\sc Dall'Acqua, M\"uller, Sch\"atzle, \& Spener} \cite{Dall'Acqua} and the flow under additional volume or  isoperimetric constraint is studied by {\sc Rupp} \cite{Rupp:20, Rupp:21}. 
Discussions on singularity formation can be found in {\sc McCoy \& Wheeler} \cite{McWh:16} and {\sc Blatt} \cite{Blatt:09,Blatt:19}, numerical approximations are found in {\sc Barrett, Garcke, \& N\"urnberg} \cite{BaGaNu:08,BaGaNu:16}.  Finally, let us  refer to  {\sc Colli \& Lauren\c{c}ot} \cite{CoLa:12}, {\sc R\"atz \& R\"oger} \cite{RaRo:21}, and {\sc Fei \& Liu} \cite{FeLi:21} for the phase-field approach to the Willmore flow under different settings.

The focus of this work is on the Generalized-Minimizing-Movements approach to the Canham-Helfrich functional \eqref{eq:ECH} in the varifold setting, where it is reformulated as $G_\CH$, see  \cite{BLS:20}.   The very weak varifold setting is instrumental for proving the existence of equilibria without a priori structural assumptions on the minimizers. In particular, it is well-suited to handle the possible onset of singularities while evolving far from stationary points. 

The GMM evolution notion rests on a limit passage in time-discrete approximations $V^n$, which are obtained from some given initial state $V^0$ and time step $\tau>0$ via the successive in $n\in\NN$ minimization of the incremental functional
$$ 
V \mapsto G_\CH(V) + \frac{1}{2\tau}W_p^2(V,V^{n-1}),
$$
see \eqref{eq:incfunc}. The GMMs $[0,\infty)\ni t \mapsto V(t)$ are then pointwise limits of subsequences of (piecewise constant in time interpolants) of time-discrete approximations $V^n$ as $\tau\to 0$, see Definition \ref{def:GMM}. Note that the incremental functional features the interplay of energy minimization and distance-control from the previous discrete state $V^{n-1}$. In particular, we consider here the Wasserstein distance $W_p$  with $p\in[1,\infty)$  between the two varifolds $V$ and $V^{n-1}$, seen as Radon measures with equal mass. This point distinguishes our analysis from previous contributions, where $L^2$ gradient flows on different parametrization settings are considered instead.

The plan of the paper is as follows.
After  providing preliminaries in Section \ref{sec:2} and  recalling some material on the stationary case in Section \ref{sec:3}, we prove global existence of GMMs for $G_\CH$ in Section \ref{sec:4}. Contrary to previous results  on Canham-Helfrich flow, no smallness condition on the energy of the initial state is required  (Theorem \ref{thm:existence}). GMMs are absolutely continuous curves with respect to the metric $W_p$ and the Canham-Helfrich energy does not increase along the evolution. This entails that GMMs satisfy lower and upper diameter bounds (Proposition \ref{l:dia}). 
Moreover, we prove that the flow can be constrained to fulfill additional properties, and still admit the existence of a GMM (Corollary \ref{cor:restricted}). This allows us to consider an enclosed-volume constraint or to reduce to reflection- or axisymmetric varifolds. In the same spirit, evolution can be constrained to multiply covered surfaces as well, as long as uniform $C^{1,1}$ regularity is enforced as in \cite{Dalphin:18}, see also \cite{LeShSi:20}. We discuss this more regular case in Section \ref{sec:regular}, where we prove that GMMs exist for any given genus and that the multiplicity of the initial surface is conserved along the flow (Theorem \ref{thm:mult}). The minimality (hence stationarity) of multiply covered spheres is analyzed  in Subsection \ref{ssec:spheres}. Moreover, we present an a-priori bound on the multiplicity in terms of the Canham-Helfrich energy (Proposition \ref{lem:kmax}). Such Li-Yau-type bound can be compared with the recent one from \cite{RuSc:22} where however no Gauss term is considered, i.e., $\gamma=0$. Eventually, we discuss in Subsection \ref{ssec:wassweaker} the possibility of weakening the metric $W_p$ and consider instead the Wasserstein distance between the surfaces, here seen as  restricted two-dimensional Hausdorff  $\mathcal H^2$ measures in space. This choice still allows for existence of GMMs of multiply covered surfaces, as long as the multiplicity is conserved. We conclude by briefly addressing more general curvature functionals than the Canham-Helfrich energy.

\section{Notation and preliminaries}\label{sec:2}

\subsection{Measures and perimeter}

Let $X$ be a locally compact and separable metric space and $N\in\NN$.  The dual space $\mathcal{M}^N(X):=C_c(X;\RR^N)^*$ of compactly supported vector functions on $X$ is the space of vector-valued {\it Radon measures} on $X$ having  $N$ components. It is  normed by the total variation  $\|\cdot\|_{\TV}(X)$ and equipped with the {\it weak-$\ast$ topology}:
$\mu_n\wto^\ast \mu$ ($n\to\infty$) if $\int_X\varphi(x)\dint\mu_n(x)\to\int_X\varphi(x)\dint\mu(x)$ for all $\varphi\in C_c(X;\RR^N)$.  
The {\it support} of $\mu\in \mathcal{M}^N(X)$, denoted by $\supp(\mu)$, is the closed set of points $x\in X$ such that $\|\mu\|_{\TV}(U)>0$ for all neighborhoods $U$ of $x$. The {\it restriction} of $\mu$ to $B\sbs X$ is the measure $\mu\llc B$ given by $(\mu\llc B)(A):=\mu(A\cap B)$ for all Borel sets $A\sbs X$. For $\mu\in\Radon^N(X)$ and a $\mu$-measurable map $f\colon X\to Y$, where $Y$ is  another  locally compact and separable metric space, the {\it pushforward measure} of $\mu$ by $f$ is defined by $(f_\sharp\mu)(A):=\mu(f^{-1}(A))$ for all Borel sets $A\sbs Y$. If $f$ is continuous and such that $f^{-1}(B)$ is compact for all compact $B\sbs Y$, then $f_\sharp\mu\in\Radon^N(Y)$. In this case, $\supp(f_\sharp\mu)=f(\supp(\mu))$ and $\int_{Y} g(y)\dint(f_\sharp \mu)(y)=\int_X (g\circ f)(x)\dint\mu(x)$ hold for all measurable $g \colon Y\to\RR^N$.
We omit $N$ from the notation if $N=1$. The scalar Radon measure $\mu\in \mathcal{M}(X)$ is called {\it positive} if $\mu(A)\ge 0$ for every Borel set  $A\subset X$. In this case, $\mu=\|\mu\|_{\TV}$. For $1\le p<\infty$,  $L^p_\mu(X;\RR^N)$ denotes the Lebesgue space of all $\mu$-measurable functions $v\colon X\to\RR^N$ such that $\|v\|_{L^p_\mu(X)}:=(\int_X |v(x)|^p\dint\mu(x))^{1/p}<\infty$. If $X\sbs\RR^\di$ for $\di\in\NN$ and $\mu$ is the $\di$-dimensional Lebesgue measure, we simply write $\dint x$ instead of $\dint\mu(x)$.

Let $E\subset\RR^\di$ be a Lebesgue measurable set. We say that $E$ is of  {\it finite perimeter} ${\rm Per}(E)$, if there exists a Radon measure $\mu^E\in\Radon^{\di}(\RR^\di)$ such that  $\int_E(\!\div\,\varphi)\dint x=\int_{\RR^d}\varphi\,\dint\mu^E$  for every $\varphi\in C^1_c(\RR^d;\RR^\di)$ with ${\rm Per}(E):=\|\mu^E\|_{\TV}(\RR^\di)<\infty$. The {\it relative perimeter} of $E$ in a Borel set $A\subset\RR^\di$ is ${\rm Per}(E;A):=\|\mu^E\|_\TV(A)$.
By definition, $\mu^E=-\D 1_E$, where $\D$ stands for the weak derivative and $1_E\colon\RR^\di\to\{0,1\}$ is the characteristic function of $E$.
With $B_r(x)$ we indicate the open ball $B_r(x):=\{y\in\RR^d:\:|x-y|<r\}$ centered at $x\in\RR^\di$ and of radius $r>0$. The set of all points  $x\in \supp(\mu^E)$ such that $\nu(x):=\lim_{r\to 0}\mu^E(B_r(x))/|\mu^E(B_r(x))|$ exists and satisfies $\nu(x)\in \SS^{\di-1}$ is called the {\it reduced boundary} of $E$, denoted by $\d^*E$. The field $\nu$ defined in this way is called the (measure-theoretic outer unit) {\it normal} to $E$.

\subsection{Varifolds and currents}

Let  $m\in\NN_0$  with $0\leq m\leq \di$ (note however that we will only need the case $d=3$ and $m=2$ later on). The {\it Grassmannian}  $G_{m,\di}$ is  the set of all  $m$-dimensional linear subspaces of $\RR^\di$.  Elements of $G_{m,\di}$ are identified with the corresponding  orthogonal projections $P\in\RR^{d\times d}$ on the $m$-dimensional subspaces.  The {\it oriented Grassmannian}, $G_{m,\di}^o$, is the set of all oriented $m$-dimensional linear subspaces in $\RR^\di$. Its elements are represented by $m$-vectors $\xi$ in $\RR^d$.  

An {\it $m$-varifold} (later {\it varifold}, for short) in $\RR^\di$ is a positive Radon measure 
$$
V\in \Radon(\RR^\di\times G_{m,\di}).
$$ 
Similarly, an {\it oriented $m$-varifold} in $\RR^\di$ is defined on the oriented Grassmannian, i.e., $V\in \Radon(\RR^\di\times G_{m,\di}^{o})$. The {\it mass} of the varifold $V$ (either oriented or not) is the positive Radon measure $\mu_V\in\Radon(\RR^\di)$ given by 
\begin{equation*}
\mu_V(\Omega):=V(\Omega\times G_{m,\di})
\end{equation*}
for all Borel sets  $\Omega\sbs\RR^\di$. 
We introduce the 
{\it  two-fold  covering map} 
$q\colon \RR^\di\times G_{m,\di}^o\to \RR^\di\times G_{m,\di}$ given by $q(x,\pm\xi)=(x,P)$, where $P$ is the projection map onto the linear subspace spanned by $\pm\xi$.
Then,  to any  oriented varifold $V\in\Radon(\RR^\di\times G_{m,\di}^o)$, we can associate a varifold $q_\sharp V\in\Radon(\RR^\di\times G_{m,\di})$ given by the push-forward of $V$ under $q$, i.e., for all $\varphi\in C_c(\RR^\di\times G_{m,\di})$,
\begin{equation*}
(q_\sharp V)(\varphi)=\int_{\RR^\di\times G_{m,\di}}\varphi(x,P)\dint (q_\sharp V)(x,P)
=\int_{\RR^\di\times G_{m,\di}^o}\varphi(q(x,\xi))\dint V(x,\xi).
\end{equation*}
In addition, $V$ gives rise to the  {\it $m$-current} $\curr{V}\in\cD_m(\RR^\di)$  defined by
\begin{equation*}
\curr{V}(\omega)
:=\int_{\RR^\di\times G_{m,\di}^o}\langle \omega(x),\xi\rangle\dint V(x,\xi)
\end{equation*}
for all $\omega\in \cD^m(\RR^\di)$, the  smooth, compactly supported $m$-forms in $\RR^\di$. Recall that a {\it general $m$-current} in $\RR^\di$, denoted by  $T\in \cD_m(\RR^\di)$, is an element of the dual space to $\cD^m(\RR^\di)$. Its {\it mass} in $\Omega\sbs\RR^\di$ is defined by the dual norm
\begin{equation*}
\MM_T(\Omega):=\sup\{T(\omega):\,\omega\in\cD^m(\RR^\di),\, \supp(\omega)\sbs \Omega ,\,\|\omega\|_\infty\leq 1\},
\end{equation*}
where $\|\omega\|_\infty=\sup_{x\in\RR^\di}|\omega(x)|$. If $\Omega=\RR^\di$ we simply write $\MM_T$.
The  {\it boundary}  $\d T$ of $T\in \cD_m(\RR^\di)$ is the $(m-1)$-current given by $\d T(\eta):=T(\!\dint \eta)$ for all  $\eta\in\cD^{m-1}(\RR^\di)$.

A set $M\sbs\RR^\di$ ($0\leq m\leq \di$) is called {\it $m$-rectifiable} (see, e.g.,~\cite{Simon:83}) if its $m$-dimensional Hausdorff measure  $\Haus^m(M)$ is finite and if, up to a $\Haus^m$-zero set $M_0$, it is contained in a countable union of  disjoint  images $M_i$ of Lipschitz maps from $\RR^m$ to $\RR^\di$, i.e.,
$$
\textstyle{M\sbs\left(\bigcup_{i=1}^\infty M_i\right)\cup M_0}
\qquad\text{with}\qquad \Haus^m(M_0)=0.
$$ 
If $M$ is $m$-rectifiable, then its cone of {\it approximate tangent vectors} \cite[p.~7]{RatajZaehle:19} 
is an $m$-dimensional plane for $\Haus^m$-almost all $x\in M$, which we will denote by $T_xM$ as in the smooth setting. An {\it orientation} $\xi^M$ of an $m$-rectifiable set $M\sbs\RR^\di$ is a measurable choice of orientation for each tangent space $T_xM$. If $T_xM$  is spanned by $(\tau_i(x))_{i=1}^m$ with $\tau_i(x)\in\RR^\di$, then, for a.e.\ $x\in M$, one has the two choices
$\pm\xi^M(x):=\pm\tau_1(x)\wedge\cdots\wedge\tau_m(x)=(T_xM)^\pm\in G_{m,\di}^o$.
If $m=\di-1$, we can identify the orientation $\xi^M(x)$ with the unit normal vector $\nu^M(x)\in \SS^{\di-1}$. In particular, in the case $m=2$ and $d=3$, which is the relevant one in our case, we will apply the covering map in the form $q\colon \RR^3\times \SS^2\to \RR^3\times G_{2,3}$, $q(x,\pm\nu)=(x,P)$, where $P\in\RR^{3\times 3}$ is the projection map onto the plane orthogonal to $\pm\nu$. 

Let $M\sbs\RR^\di$ be $m$-rectifiable with orientation $\xi^M$. A {\it rectifiable $m$-current $T\in \cD_m(\RR^\di)$  with  integer  multiplicity}  $\theta\in L^1_{\Haus^m\llc M}(\RR^\di;\ZZ)$ is defined as 
$$
T(\omega)=\int_{M}\langle \omega(x), \xi^M(x)\rangle \,\theta(x)\dint \Haus^m(x)
$$
for all $\omega\in\cD^m(\RR^\di)$. This current will be denoted by $T=T[M,\xi^M,\theta]$ to emphasize that it is determined by the triple $M,\xi^M,\theta$. Its mass corresponds to
$
\MM_T=\int_M |\theta(x)|\dint\Haus^m(x)
$.
A rectifiable current $T$ with $\MM_T<\infty$ is called an {\it integral current}, if in addition $\MM_{\d T}<\infty$. If an integral current is supported on a smooth submanifold and its boundary is supported on the boundary of this submanifold, then the current must have constant multiplicity.  This fact is known as {\it Federer's Constancy Theorem}, 
which actually also holds in the more general case of Lipschitz submanifolds 
\cite[Cor.\ 1.54]{RatajZaehle:19}.

\begin{theorem}[{{Constancy, \cite[Sec.~4.1.31]{Federer:69}}}]\label{thm:constancy} Let $M\sbs\RR^\di$ be an $m$-dimensional, connected $C^1$ submanifold of $\RR^\di$ with boundary and assume that $M$ is oriented by $\xi^M\in C(M;G_{m,\di}^o)$. If an integral current $T\in\cD_m(\RR^\di)$  satisfies 
$\supp(T)\sbs M$ and $\supp(\d T)\sbs\d M$,
then $T$ has a constant multiplicity $\theta=c\in\ZZ$, i.e., 
$T(\omega)=c\int_{M}\langle \omega, \xi^M\rangle\dint \Haus^m$
for all $\omega\in\cD^m(\RR^\di)$.
\end{theorem}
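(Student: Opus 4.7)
The plan is to establish the constancy of the multiplicity $\theta$ in three stages: representation, local constancy on the interior, and globalization by connectedness. First, since $T$ is integral with $\supp(T)\subset M$ and $M$ is a $C^1$ submanifold with continuous orientation $\xi^M$, the general structure theorem for rectifiable currents yields a representation $T=T[M,\xi^M,\theta]$ with $\theta\in L^1_{\Haus^m\llc M}(\RR^\di;\ZZ)$. The continuity of $\xi^M$ removes the usual sign ambiguity, so that $\theta$ is an unambiguously $\ZZ$-valued function rather than a nonnegative multiplicity defined only up to a sign flip.

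For the local-constancy step, I would fix $x_0\in M\setminus\d M$ and choose a $C^1$ chart $\varphi\colon U\to V\subset\RR^m$ with $U\subset M\setminus\d M$ an open neighborhood of $x_0$, extended to a $C^1$ diffeomorphism of a tubular neighborhood $\widetilde U\subset\RR^\di$ of $U$ onto a product $V\times W$. Set $S:=\varphi_\sharp(T\llc \widetilde U)$. The hypothesis $\supp(\d T)\subset \d M$ together with $U\cap \d M=\emptyset$ gives $\d T(\omega)=0$ for every $\omega\in\cD^{m-1}(\RR^\di)$ compactly supported in $\widetilde U$; this translates to $\d S=0$ on $V$. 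Since $S$ is an integer-multiplicity rectifiable $m$-current on the open subset $V$ of $\RR^m$, the classical flat version of the constancy theorem (distributionally, $\d S=0$ on $V$ forces the multiplicity function to be locally constant, and integrality forces the constant to lie in $\ZZ$) implies that $S$ equals a constant integer multiple of the ambient $m$-plane current on each connected component of $V$. Pulling back by $\varphi$ and using the continuity of $\xi^M$ so that no sign flip enters, one obtains that $\theta$ is locally constant on $U$.

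Since such charts cover $M\setminus\d M$, the function $\theta$ is locally constant $\Haus^m$-a.e.\ on $M\setminus\d M$. The connectedness of $M$ and the fact that $\d M$ is an $(m-1)$-dimensional submanifold, hence $\Haus^m$-negligible, then force $\theta$ to equal a single integer $c\in\ZZ$ $\Haus^m$-a.e.\ on $M$, which is the asserted conclusion.

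I expect the main difficulty to lie in the chart reduction of the second step: one must verify carefully that $\varphi_\sharp(T\llc \widetilde U)$ is genuinely an integer-multiplicity rectifiable $m$-current on $V$ with vanishing boundary (as opposed to merely a flat $m$-chain), and that the continuous orientation $\xi^M$ is transported consistently along the chart so that the integer $\theta$ reconstructed from $S$ has a well-defined sign globally on $M$. The $C^1$ regularity of $M$ together with the continuity of $\xi^M$ is precisely what makes this bookkeeping go through; if either hypothesis were dropped, one would in general only recover constancy of $|\theta|$ rather than of $\theta$ itself.
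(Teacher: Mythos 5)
The paper does not prove this statement: it is recalled as a classical result and attributed to Federer \cite[Sec.~4.1.31]{Federer:69}, with the Lipschitz extension credited to \cite[Cor.~1.54]{RatajZaehle:19}, so there is no in-paper argument to compare against. Your sketch is the standard textbook proof of the constancy theorem and is essentially sound: (1) the structure theorem for integer-multiplicity rectifiable currents together with $\supp(T)\sbs M$ and the $C^1$ regularity of $M$ gives $T=T[M,\xi^M,\theta]$ (the approximate tangent planes of the carrying rectifiable set must agree with $T_xM$ a.e., so the orientation is $\pm\xi^M$ and the sign can be absorbed into $\theta$); (2) flattening a chart away from $\d M$ and using $\supp(\d T)\sbs\d M$ reduces to the top-dimensional constancy theorem in an open subset of $\RR^m$, where $\d S=0$ is exactly $\D u=0$ for the representing distribution, forcing local constancy, with integrality giving $c\in\ZZ$; (3) connectedness globalizes. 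Two small refinements: once the global representation $T=T[M,\xi^M,\theta]$ is fixed in step (1), the per-chart sign bookkeeping you worry about is harmless, since local constancy of $\theta$ is unaffected by a sign flip and agreement of the constants on chart overlaps is automatic because they are both equal to $\theta$ a.e.\ there; and in step (3) you should invoke that the \emph{interior} $M\setminus\d M$ of a connected manifold with boundary is itself connected (e.g.\ via a collar), since local constancy is only established off $\d M$. Your closing remark that dropping $C^1$ regularity destroys the conclusion is too strong: as the paper notes, the theorem persists for Lipschitz submanifolds.
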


Let $M\sbs\RR^\di$ be $m$-rectifiable and $\theta\in L^1_{\Haus^m\llc M}(\RR^\di;\NN)$. An {\it integral varifold} $V=V[M,\theta]$ is a varifold defined for all $\varphi\in C_c(\RR^\di\times G_{m,\di})$ by the formula
$$ 
V(\varphi)=\int_{M}\varphi(x,T_xM)\theta(x)\dint\Haus^m(x).$$
Equivalently, we will also write 
$\dint V(x,P)=\theta(x)\dint\Haus^m(x)\otimes\delta_{T_xM}(P)$. 
The set of all integral $m$-varifolds on $\RR^\di$ is denoted by $IV_m(\RR^\di)$. 
The mass of $V=V[M,\theta]$ corresponds to
$
\mu_V(\RR^\di)
=\int_M \theta(x)\dint\Haus^m(x)
$.
If  $\xi^M$ is an orientation of $M$ and $\theta^\pm\in L^1_{\Haus^m\llc M}(\RR^\di;\NN_0)$, then an oriented varifold $V$ defined for all $\psi\in C_c(\RR^\di\times G_{m,\di}^o)$ by  the formula
$$
V(\psi)=\int_{M}\left(\theta^+(x)\psi(x,\xi^M(x))+\theta^-(x)\psi(x,-\xi^M(x))\right)\dint \Haus^m(x)
$$
is called an {\it oriented integral varifold} and is denoted by $V=V[M,\xi^M,\theta^+,\theta^-]\in IV_m^o(\RR^\di)$.  We also write
$\dint V(x,\xi)=\dint(\Haus^m\llc M)(x)\otimes(\theta^+(x)\delta_{\xi^M(x)}(\xi)+\theta^-(x)\delta_{-\xi^M(x)}(\xi))$.
The mass of $V$ corresponds to
$
\mu_V(\RR^\di) 
=\int_M (\theta^++\theta^-)(x)\dint\Haus^m(x)
$.
If $V=V[M,\xi^M,\theta^+,\theta^-]\in IV_m^o(\RR^\di)$, then 
$$
q_\sharp V=q_\sharp V[M,\theta^++\theta^-]\in IV_m(\RR^\di)
\qquad\text{and}\qquad
\curr{V}=\curr{V}[M,\xi^M,\theta^+-\theta^-]\in \cD_m(\RR^\di).
$$
We say that $V\in IV_m(\RR^\di)$ is a {\it curvature varifold with boundary} and write $V\in AV_m(\RR^\di)$, if there exists $A^V\in L^1_{\loc,V}(\RR^\di\times G_{m,\di};\RR^{\di\times \di\times \di})$ and $\d V\in\Radon^\di(\RR^\di\times G_{m,\di})$ such that 
\begin{align}
&\int_{\RR^\di\times G_{m,\di}}\sum_{j=1}^\di\left(P_{ij}\d_j\varphi(x,P)+\sum_{k=1}^\di(\d_{P_{jk}}\varphi(x,P))\,A_{ijk}^V(x,P)+A_{jij}^V(x,P)\,\varphi(x,P)\right)\dint V(x,P)\nn\\
&=-\int_{\RR^\di\times G_{m,\di}}\varphi(x,P)\dint (\d V)_i(x,P)\label{eq:defAV}
\end{align}
for all $\varphi\in C_c^1(\RR^\di\times G_{m,\di})$ and $1\leq i\leq \di$. The {\it curvature functions} $A^V_{ijk}$ ($1\leq i,j,k\leq d$) give rise to the {\it generalized second fundamental form} $\II^V\in L^1_{\loc,V}(\RR^\di\times G_{m,\di};\RR^{\di\times \di\times \di})$ via
$$
(\II^V(\cdot,P))_{ij}^k:=\sum_{l=1}^\di A_{ikl}^V(\cdot,P)P_{lj}
$$
and the vector-valued Radon measure $\d V$ is called the {\it boundary measure}. The set of {\it oriented curvature varifolds}, $AV_m^o(\RR^\di)$, is given by all $V\in IV_m^o(\RR^\di)$ such that $q_\sharp V\in
AV_m(\RR^\di)$.

\subsection{Compactness}

Let us recall the {\it compactness theorems} for oriented integral varifolds by Hutchinson \cite{Hutchinson:86} and for curvature varifolds with boundary by Mantegazza \cite{Mantegazza:96}. Let $\Omega\sbs\RR^\di$ be open. The classes of varifolds defined on $\Omega$ are denoted by $IV_m(\Omega)$, $IV_m^o(\Omega)$, and $AV_m(\Omega)$.

\begin{theorem}[Compactness, 
{\cite[Thm.\ 3.1]{Hutchinson:86}}]\label{thm:IVmcpt} The set 
$$
\left\{V\in IV^o_m(\Omega):\:\mu_{V}(\Omega)+\|\delta(q_\sharp V)\|_{\TV}(\Omega)+\mathbb{M}_{\d \curr{V}}(\Omega)<\infty \right\}
$$
is compact in the weak-$\ast$ topology of $\Radon(\Omega\times G_{m,\di}^o)$.
\end{theorem}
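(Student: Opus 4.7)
The plan is to combine the Banach-Alaoglu theorem with two classical integrality-preserving compactness results: Allard's theorem for integral varifolds applied to the unoriented projection $q_\sharp V$, and the Federer-Fleming theorem for integral currents applied to the current $\curr{V}$. These two pieces of information are then reconciled via the two-to-one fiber structure of the covering map $q$.

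Let $(V_n)\sbs IV_m^o(\Omega)$ satisfy $\mu_{V_n}(\Omega)+\|\delta(q_\sharp V_n)\|_{\TV}(\Omega)+\MM_{\d\curr{V_n}}(\Omega)\leq C$. Since $V_n(\Omega\times G_{m,\di}^o)=\mu_{V_n}(\Omega)\leq C$, the Banach-Alaoglu theorem applied in $\Radon(\Omega\times G_{m,\di}^o)=C_c(\Omega\times G_{m,\di}^o)^\ast$ yields a subsequence (not relabeled) and a positive Radon measure $V$ with $V_n\wto^\ast V$. A priori $V$ is only an oriented $m$-varifold in the measure-theoretic sense; the rest of the argument upgrades it to an integral oriented varifold and checks closedness of the bound.

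For the unoriented part, $q\colon\RR^\di\times G_{m,\di}^o\to\RR^\di\times G_{m,\di}$ is continuous and proper, so $q_\sharp V_n\wto^\ast q_\sharp V$ in $\Radon(\Omega\times G_{m,\di})$. Combined with $\mu_{q_\sharp V_n}=\mu_{V_n}$ and $\|\delta(q_\sharp V_n)\|_\TV(\Omega)\leq C$, Allard's compactness theorem for integral varifolds gives $q_\sharp V\in IV_m(\Omega)$: there exist an $m$-rectifiable set $M\sbs\Omega$ and $\theta\in L^1_{\Haus^m\llc M}(\Omega;\NN)$ with $q_\sharp V=V[M,\theta]$. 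Simultaneously, $\MM_{\curr{V_n}}(\Omega)\leq\mu_{V_n}(\Omega)\leq C$ together with the assumed boundary bound puts the currents $\curr{V_n}$ in the hypothesis of the Federer-Fleming compactness theorem, yielding along a further subsequence an integral current $T\in\cD_m(\Omega)$. Since the map $V\mapsto\curr{V}$ is weak-$\ast$ continuous on bounded sets, $T=\curr{V}$ is itself integral, and one may write $\curr{V}=\curr{V}[M,\xi^M,\sigma]$ with $\sigma\in L^1_{\Haus^m\llc M}(\Omega;\ZZ)$.

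The final step assembles the two pieces. Because $q$ identifies $(x,P)$ with the two-element fiber $\{(x,\pm\xi^M(x))\}\sbs\Omega\times G_{m,\di}^o$, the disintegration of $V$ with respect to $q_\sharp V$ is concentrated on these fibers, producing nonnegative Borel functions $\theta^\pm$ with $\theta^++\theta^-=\theta$ $\Haus^m$-a.e.\ on $M$. The integrality of $\curr{V}$ forces $\theta^+-\theta^-=\sigma\in\ZZ$, and combined with $\theta^++\theta^-=\theta\in\NN$ this pins $\theta^\pm\in\NN_0$, so $V=V[M,\xi^M,\theta^+,\theta^-]\in IV_m^o(\Omega)$. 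The three quantities in the defining bound are lower semicontinuous under weak-$\ast$ convergence (mass by lower semicontinuity of total variation, first variation and current boundary mass by their dual characterization), so $V$ lies in the set, proving sequential weak-$\ast$ compactness. The main obstacle is precisely this last reconciliation: matching the unoriented integrality of $\theta$ with the signed integrality of $\sigma$ in order to recover individual integrality of $\theta^\pm$ at $\Haus^m$-a.e.\ point of $M$ is the technical core of the statement, the rest being assembly of standard geometric-measure-theoretic compactness tools.
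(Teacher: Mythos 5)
This theorem is quoted from Hutchinson (\cite[Thm.\ 3.1]{Hutchinson:86}) and the paper supplies no proof of its own, so your attempt can only be measured against the original argument. Your architecture --- Banach--Alaoglu for the measures, Allard's compactness for $q_\sharp V_n$, Federer--Fleming closure for $\curr{V_n}$, and a reconciliation along the two-point fibers of $q$ --- is the right skeleton, and you correctly flag the reconciliation as the technical core. The problem is that your execution of that core step fails.

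From $\theta^++\theta^-=\theta\in\NN$ and $\theta^+-\theta^-=\sigma\in\ZZ$ you conclude $\theta^\pm\in\NN_0$; but $\theta^+=(\theta+\sigma)/2$ is only a half-integer unless $\theta$ and $\sigma$ have the same parity, which nothing in your argument provides. Worse, the implication you rely on --- integrality of $q_\sharp V$ and of $\curr{V}$ forces oriented integrality of $V$ --- is genuinely false. Take $m=1$, $M=[0,1]\times\{0\}\sbs\RR^2$, and $V_n=V[M,\xi_n,1,0]$ with $\xi_n$ alternating between $+e_1$ and $-e_1$ on consecutive intervals of length $1/(2n)$. Then $V_n\wto^\ast V[M,e_1,\tfrac12,\tfrac12]$, while $q_\sharp V_n\equiv V[M,1]$ is integral for every $n$ and $\curr{V_n}\wto 0$, an integral current; so both limit projections are integral yet $V$ is not an oriented integral varifold. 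This sequence is excluded from Hutchinson's class only because $\MM_{\d\curr{V_n}}\approx 4n\to\infty$: the boundary-mass bound is precisely what forbids orientation oscillation, and it must therefore enter the integrality argument in a localized, quantitative way (in Hutchinson's proof, through a blow-up/slicing analysis at $\Haus^m$-a.e.\ point of $M$ that propagates the pointwise parity relation $\theta^+_n-\theta^-_n\equiv\theta^+_n+\theta^-_n \pmod 2$ to the limit). Your proof uses the boundary bound only to invoke Federer--Fleming, whose conclusion is, as the example shows, insufficient. Two secondary, fixable points: the set as transcribed imposes only finiteness of the three quantities, so compactness must be read on sublevel sets with a uniform constant, as you implicitly do; and the identification $\curr{V}=\curr{V}[M,\xi^M,\sigma]$ with the same carrier and tangent planes as $q_\sharp V$ deserves a line of justification (it follows from $\MM_{\curr{V}}(\cdot)\le\mu_V(\cdot)$).
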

Here, $\delta(q_\sharp V)\colon C^1_c(\Omega;\RR^\di)\to\RR$ stands for the first variation \cite{Allard:72} of the varifold $q_\sharp V$, which can be estimated in terms of mass, curvature, and boundary \cite[Lemma
2.3]{BLS:20},
$$
\|\delta(q_\sharp V)\|_\TV(\Omega)
\leq \sqrt{\di\,\mu_{V}(\Omega)}\, \|A^{q_\sharp V}\|_{L^2_{q_\sharp V}(\Omega\times G_{m,\di})}+\|\d (q_\sharp  V)\|_\TV(\Omega\times G_{m,\di}).
$$ 
The {\it curvature varifold convergence} $V_n\wto^\ast V$ in $AV_m(\Omega)$ ($n\to\infty$) is defined as the {\it measure-function pairs convergence} $(V_n,A^{V_n})\wto^\ast (V,A^{V})$: $V_n\wto^\ast V$ in $\Radon(\Omega\times G_{m,\di})$ and  $A^{V_n}V_n\wto^\ast A^{V}V$ in $\Radon^{\di\times\di\times\di}(\Omega\times G_{m,\di})$, cf.~\cite{Hutchinson:86}. In particular, if $V_n\wto^\ast V$ in $AV_m(\Omega)$ then, by definition \eqref{eq:defAV}, $\d V_n\wto^\ast\d V$ in $\Radon^\di(\Omega\times G_{m,\di})$.

\begin{theorem}[Compactness,
{\cite[Thm.\ 6.1]{Mantegazza:96}}]\label{thm:AVmcpt} The set 
$$
\left\{V\in AV_m(\Omega):\:\mu_V(\Omega)+\|A^V\|^2_{L_V^2(\Omega\times G_{m,\di})}+\|\d V\|_{\TV}(\Omega\times G_{m,\di})<\infty\right\}
$$
is compact with respect to curvature varifold convergence. 
\end{theorem}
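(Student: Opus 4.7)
\smallskip

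\noindent\textbf{Proof plan for Theorem \ref{thm:AVmcpt}.} Let $(V_n)_{n\in\NN}\subset AV_m(\Omega)$ be a sequence satisfying the uniform bound $\mu_{V_n}(\Omega)+\|A^{V_n}\|^2_{L^2_{V_n}}+\|\d V_n\|_{\TV}\le C$. The plan is to produce a curvature varifold limit in three stages: first extracting weak-$\ast$ limits of the three relevant measures, then passing to the limit in the defining identity \eqref{eq:defAV}, and finally upgrading the limit object to membership in $AV_m(\Omega)$.

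First, since $\mu_{V_n}(\Omega)$ is bounded, the $V_n$ themselves have bounded total variation on $\Omega\times G_{m,\di}$, so by Banach--Alaoglu there is a subsequence (not relabeled) with $V_n\wto^\ast V$ in $\Radon(\Omega\times G_{m,\di})$ for some positive Radon measure $V$. By Cauchy--Schwarz, the vector-valued measures $A^{V_n}V_n$ satisfy
\[
\|A^{V_n}V_n\|_\TV(\Omega\times G_{m,\di})\le \sqrt{\mu_{V_n}(\Omega)}\,\|A^{V_n}\|_{L^2_{V_n}(\Omega\times G_{m,\di})}\le C,
\]
so up to a further subsequence $A^{V_n}V_n\wto^\ast \mu$ in $\Radon^{\di\times\di\times\di}(\Omega\times G_{m,\di})$. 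Likewise $\d V_n\wto^\ast \sigma$ in $\Radon^\di(\Omega\times G_{m,\di})$ by the boundary bound.

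Second, I would pass to the limit in the defining equation \eqref{eq:defAV}. For a fixed $\varphi\in C^1_c(\Omega\times G_{m,\di})$ all three terms on the left-hand side are linear tests against $V_n$, $A^{V_n}V_n$, and $V_n$ respectively (the first with test $\sum_j P_{ij}\d_j\varphi$, the second with the $C_c$ test $\d_{P_{jk}}\varphi$, the third with $\varphi$ integrated against the $jij$-component of $A^{V_n}V_n$), while the right-hand side tests $\varphi$ against $\d V_n$. Hence the identity passes to the limit with $V,\mu,\sigma$ replacing $V_n, A^{V_n}V_n, \d V_n$. This already identifies $\sigma$ as the boundary measure of the limit, provided we can show $\mu=A^V V$ with an $L^2_V$ density.

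Third, I would upgrade $V$ to an integral curvature varifold. The estimate from \cite[Lemma 2.3]{BLS:20} recalled right after Theorem \ref{thm:IVmcpt} gives $\|\delta V_n\|_\TV(\Omega)\le \sqrt{\di\,\mu_{V_n}(\Omega)}\,\|A^{V_n}\|_{L^2_{V_n}}+\|\d V_n\|_\TV$, hence the first variations are uniformly bounded. Combined with the mass bound, Allard's compactness theorem for integral varifolds yields $V\in IV_m(\Omega)$. To recognize $\mu$ as $A^V V$ with $A^V\in L^2_V$, I would invoke the Hutchinson/Reshetnyak-type measure-function pair convergence framework: an equi-integrability/lower semicontinuity argument (apply Reshetnyak's lower semicontinuity to the convex integrand $(v,P)\mapsto |v|^2$, using that $d(A^{V_n}V_n)/dV_n$ is in $L^2_{V_n}$ with norm bounded by $C$) shows $\mu\ll V$ and that the Radon--Nikod\'ym derivative $A^V:=d\mu/dV$ lies in $L^2_V$ with $\|A^V\|^2_{L^2_V}\le\liminf\|A^{V_n}\|^2_{L^2_{V_n}}$. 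Substituting back gives exactly \eqref{eq:defAV} for $V$, so $V\in AV_m(\Omega)$ and $(V_n,A^{V_n})\wto^\ast(V,A^V)$ in the measure-function pairs sense.

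The main obstacle is the last step: the mere weak-$\ast$ convergence $A^{V_n}V_n\wto^\ast\mu$ does not by itself guarantee $\mu\ll V$, let alone that the density is $L^2_V$. This is exactly where the uniform $L^2$ bound on the curvature functions is essential, and where the careful use of Reshetnyak-type lower semicontinuity (or, equivalently, a Young-measure disintegration argument) replaces what in the smooth setting would be a straightforward compactness on derivatives. Integrality of the limit, although classical via Allard, also relies delicately on the same first-variation control; without the boundary term in the definition of $AV_m$, this is automatic, but the presence of $\d V$ requires ensuring the boundary measures do not produce spurious mass in the interior, which is handled by the weak-$\ast$ limit $\sigma$ being supported consistently with the boundary of $V$.
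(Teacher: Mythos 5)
The paper does not prove this statement at all---it is quoted directly from Mantegazza \cite[Thm.~6.1]{Mantegazza:96}---and your sketch reproduces the standard argument given there: weak-$\ast$ extraction of limits of $V_n$, $A^{V_n}V_n$, and $\d V_n$ from the uniform bounds, Allard compactness (via the first-variation estimate) to get integrality of the limit, Hutchinson's measure-function-pair compactness with exponent $p=2>1$ to obtain $\mu\ll V$ with density $A^V\in L^2_V$ and lower semicontinuity of the norm, and passage to the limit in \eqref{eq:defAV}, which by definition identifies $\sigma$ as the boundary measure. This is correct and essentially the cited proof; the only cosmetic points are that the displayed set is compact only when read as a sublevel set with a \emph{uniform} bound on the three quantities (which is how you read it), and that your final worry about $\sigma$ being ``supported consistently with the boundary of $V$'' is unnecessary, since Mantegazza's $\d V$ is not constrained to any geometric boundary and Allard's theorem needs only the total-variation bound on $\delta V_n$.
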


\subsection{Wasserstein space}\label{ssec:Wasserstein}

Let $(X, d_X)$ be a complete separable metric space and 
$$
\Prob(X):=\{\mu\in\Radon(X):\mu(X)=m_0\}
$$
indicate the set of positive Radon measures with fixed mass $m_0>0$.
For $p\in [ 1,\infty)$, let 
$
\Prob_p(X):=\{\mu\in\Prob(X):\:\int_X d_X^p(x,x_0)\,\dint\mu(x)<\infty\}
$
for some $x_0\in X$ denote its subspace of measures with {\it finite $p$-th moment} and recall that $\Prob_p(X)=\Prob(X)$ for all $p$ if $X$ is compact. The $p$-{\it Wasserstein distance} between $\mu,\tilde\mu\in\Prob_p(X)$ is defined  by 
\begin{equation*}
W_p(\mu,\tilde\mu):=\left(\min_{\lambda\in\Pi(\mu,\tilde\mu)}\int_{X\times X} d_X^p(x,\tilde x)\,\dint\lambda(x,\tilde x)\right)^{1/p},
\end{equation*} 
where 
$
\Pi(\mu,\tilde\mu):=\left\{\lambda\in\Radon(X\times X):\:(\pi_1)_\sharp\lambda=\mu,\: (\pi_2)_\sharp\lambda=\tilde\mu\right\}
$
is the set of all couplings with marginals $\mu$ and $\tilde\mu$. The metric space $(\Prob_p(X),W_p)$ is called the $p$-{\it Wasserstein space} on $(X, d_X)$.
If $\mu,\tilde\mu\in\Prob_1(X)$ and have bounded support, then by \cite[Thm.\ 8.10.45]{Bogachev2:07} and \cite[(7.1.2)]{AmGiSa:08},
$$
W_1(\mu,\tilde\mu) = \sup\left\{{\int_X f\,\dint(\mu-\tilde\mu):\:f\in\Lip_1(X)}\right\},
$$
which is known as the {\it Kantorovich-Rubinshtein distance} (or {\it modified bounded Lipschitz distance}). Here, $\Lip_1$ stands for Lipschitz continuous functions with unit Lipschitz constant. This representation renders the $W_1$ distance particularly useful for applications, see \cite{BuLeMa:17, BuLeMa:22}. 

The metric space $(\Prob_p(X),W_p)$ is complete and separable  \cite[Prop.\ 7.1.5]{AmGiSa:08}. Moreover, $W_p$ generates the weak-$\ast$ topology on $\Prob_p(X)$: If $(\mu_n)\sbs \Prob_p(X)$, $\mu\in\Prob_p(X)$, and $n\to\infty$, then 
$$
W_p(\mu_n,\mu)\to 0
\:\:\Longleftrightarrow\:\:
\mu_n\wto^\ast\mu\:\:\text{and}\:\:\text{$\int_Xf\dint\mu_n\to\int_Xf\dint\mu\:\:$ for $f\in C(X)$ with $p$-growth,}
$$
 i.e., $|f|\le c(1+d_X^p(\cdot,x_0))$ for some $x_0\in X$ and $c>0$.


\section{Canham-Helfrich functional}\label{sec:3}

We devote this section to recalling some results for the stationary case. In particular, by restricting to $d=3$ and $m=2$ we follow \cite{BLS:20} and introduce the generalization of the classical Canham-Helfrich energy \eqref{eq:ECH} to oriented curvature two-varifolds
\begin{equation}\label{eq:FCH}
F_\CH\colon AV_2^o(\RR^3)\to\RR,\qquad
F_\CH(V):=\int_{\RR^3\times\SS^2} f_\CH\big(\nu,A^{q_\sharp V}(q(x,\nu))\big)\,\dint V(x,\nu),
\end{equation} 
where the  integrand $f_\CH\colon\SS^2\times\RR^{3\times 3\times 3}\to\RR$ is given by
$$
{f_\CH(\nu,A):=\sum_{i=1}^3\left(\frac{\beta}{2}\Big(\sum_{j=1}^3A_{jij}-\nu_iH_0\Big)^2
	+\frac{\gamma}{2}\Big(\sum_{j=1}^3A_{jij}\Big)^2
	-\frac{\gamma}{4}\sum_{j,k=1}^3 A_{ijk}^2\right).}
$$ 
The form of the integrand in  \eqref{eq:FCH} can be deduced from  the identity $(H-H_0)^2=|\ove H-\nu H_0|^2=\sum_{i=1}^3(\ove H_i-\nu_i H_0)^2$ by replacing the mean curvature vector $\ove H=H\nu$  by its {\it varifold analogue}
$$
\textstyle{\ove H^{q_\sharp V}:=\left(\sum_{j=1}^3 A_{jij}^{q_\sharp V}\right)_{i=1}^3}.
$$ 
Note that the squared norm of the second fundamental form of a curvature varifold is given by
$
|\II^{q_\sharp V}|^2=\frac{1}{2}|A^{q_\sharp V}|^2=\frac{1}{2}\sum_{i,j,k=1}^3 (A_{ijk}^{q_\sharp V})^2,
$ 
which allows us to define the {\it generalized Gauss curvature}
\begin{equation}\label{eq:K}
K^{q_\sharp V}:=\frac{1}{2}\left(|\ove H^{q_\sharp V}|^2-|\II ^{q_\sharp V}|^2\right)=\frac{1}{2}|\ove H^{q_\sharp V}|^2-\frac{1}{4}|A^{q_\sharp V}|^2.
\end{equation}
Since $q(x,\nu)=(x,P)=q(x,-\nu)$, it follows that $(x,\nu)\mapsto A^{q_\sharp V}(q(x,\nu))$ does not depend on the sign of $\nu$ and reversing orientation has the sole effect of changing the sign in front of $H_0$. 

We identify the {\it area} of an oriented varifold $V\in\Radon(\RR^3\times\SS^2)$ with its mass, namely, $\mu_V(\RR^3)=V(\RR^3\times\SS^2)$. For $m_0>0$ fixed,  at the stationary level we are interested in varifolds $V\in AV_2^o(\RR^3)$ minimizing $F_\CH$ under the mass constraint  
\begin{equation}\label{eq:massconstraint}
\mu_V(\RR^3)=m_0.
\end{equation}

\subsection{Varifold minimizers} 

Existence of varifold minimizers follows  by the  Direct Method, exploiting compactness results for oriented and curvature varifolds (see Theorems  \ref{thm:IVmcpt} and \ref{thm:AVmcpt}) together with the lower semicontinuity \cite[Thm.\ 3.2]{BLS:20},
\begin{equation*}
F_\CH(V)\leq\liminf_{n\to\infty}F_\CH(V_n)
\quad\text{for}\quad
V_n\wto^\ast V \:\: (n\to\infty)\quad\text{in}\quad AV_2^o(\RR^3),
\end{equation*}  
where $V_n\wto^\ast V$ is the curvature varifold convergence, namely, $V_n\wto^\ast V$ in $\Radon(\RR^3\times\SS^2)$ and $(q_\sharp V_n,A^{q_\sharp V_n})\wto^\ast (q_\sharp V,A^{q_\sharp V})$ as measure-function pairs. A key ingredient in the mentioned lower-semicontinuity proof is the convexity of the integrand $f_\CH(\nu,.)$, which holds under the following condition on the parameters:
\begin{equation}\label{eq:betagamma}
-{\frac{6}{5}}\,\beta<\gamma<0.
\end{equation}
In fact, such convexity gives rise to the {\it curvature bound} \cite[Prop.\ 3.1]{BLS:20}
\begin{equation}\label{eq:varL2bounds}
\|A^{q_\sharp V}\|^2_{L^2_{q_\sharp V}(\RR^3\times G_{2,3})}\leq c_1\left(F_\CH(V)+c_2\,\mu_{V}(\RR^3)\right)
\end{equation}
for all $V\in AV_2^o(\RR^3)$, with constants $c_1>0$, $c_2\geq 0$ depending on data (in particular, $c_2=0$ if $H_0=0$). An early consequence of  \eqref{eq:varL2bounds} is that the Canham-Helfrich energy $F_\CH$ is bounded from below in terms of the mass.


\subsection{Lower bound in terms of the Willmore energy} 

A special instance of the classical Canham-Helfrich energy is the {\it Willmore energy} 
$$
\Will(M):=\frac{1}{4}\int_{M}H^2\dint\Haus^2,
$$
corresponding indeed to the choice $H_0=0$,  $\beta=1/2$, and $\gamma=0$  in \eqref{eq:ECH}. If $M\sbs\RR^3$ is an immersed closed surface one has $\Will(M)\geq 4\pi$ with equality only for the  sphere $M=\SS^2_R$ of any radius $R>0$ \cite{Willmore:1996}.  

Without changing notation, one can define the Willmore energy  of  an (oriented) curvature varifold $V\in AV_2^o(\RR^3\times\SS^2)$ as
$$
\Will(V):=\frac{1}{4}\int_{\RR^3\times G_{2,3}}|\ove H^{q_\sharp V}(x,P)|^2\dint (q_\sharp V)(x,P)
=\frac{1}{4}\int_{\RR^3\times\SS^2}|\ove H^{q_\sharp V}(q(x,\nu))|^2\dint V(x,\nu),
$$
which for $V=V[M,\nu^M,\theta^+,\theta^-]$ reduces to
\begin{equation}\label{eq:Will}
\Will(V)=\frac{1}{4}\int_{\RR^3}|\ove H^{q_\sharp V}|^2\dint\mu_V
=\frac{1}{4}\int_{M}|\ove H^{q_\sharp V}|^2(\theta^++\theta^-)\dint\Haus^2.
\end{equation}
The latter controls the multiplicity of the varifold $q_\sharp V=q_\sharp V[M,\theta^++\theta^-]$ via the {\it Li-Yau inequality} \cite{LiYau:82} (see \cite{KuSc:12} for its generalization to integral varifolds),
\begin{equation}\label{eq:LiYau}
\Will(V)\geq 4\pi (\theta^++\theta^-).
\end{equation}
The algebraic estimate 
$
|\ove H^{q_\sharp V}|^2=\sum_{i=1}^3\left(\sum_{j=1}^3A^{q_\sharp V}_{jij}\right)^2 
\leq 3\sum_{i,j,k=1}^3\left(A^{q_\sharp V}_{ijk}\right)^2=3|A^{q_\sharp V}|^2
$, 
cf.\ \cite[Lemma 2.3]{BLS:20}, entails $4\Will(V)\leq 3\|A^{q_\sharp V}\|^2_{L^2_{q_\sharp V}(\RR^3\times G_{2,3})}$. From this, under assumption \eqref{eq:betagamma}, the curvature bounds \eqref{eq:varL2bounds} imply the control
\begin{equation}\label{eq:WillFCHbound}
4\Will(V)\leq 3\,c_1\left(F_\CH(V)+c_2\,\mu_{V}(\RR^3)\right),
\end{equation}
which, in combination with \eqref{eq:LiYau},  in turn gives a  multiplicity bound in terms of the Canham-Helfrich energy, i.e.,  a Li-Yau-type inequality.
Let us recall again  that Li-Yau inequalities for the Canham-Helfrich functional have been recently obtained in \cite{RuSc:22}, where, however, the Gauss term is not considered.

In the following, we derive an alternative estimate in terms of the Willmore energy with explicit constants.
Recall 
that $V=V[M,\nu^M,\theta^+,\theta^-]$ is given by
\begin{equation}\label{eq:Mthetapm}
 \dint V(x,\nu)=\dint(\Haus^2\llc M)(x)\otimes (\theta^+(x)\delta_{\nu^M(x)}(\nu)+\theta^-(x)\delta_{-\nu^M(x)}(\nu)),
\end{equation}
with an $\Haus^2$-measurable, countably rectifiable $M\sbs\RR^3$ with orientation $\pm\nu^M(x)$ (corresponding to $(T_xM)^\pm$) and locally integrable multiplicities $\theta^\pm(x)\in\NN$ for $(\Haus^2\llc M)$-a.e.\ $x\in\RR^3$. Moreover, if $V$ is an (oriented) curvature varifold, i.e., $V\in AV_2^o(\RR^3\times\SS^2)$,  then the energy \eqref{eq:FCH} can be written as
\begin{align*}
 F_\CH(V)&=\int_{\RR^3\times\SS^2}\left(\frac{\beta}{2}\left|\ove H^{q_\sharp V}(q(x,\nu))-\nu H_0\right|^2+\gamma\, K^{q_\sharp V}(q(x,\nu)) \right)\dint V(x,\nu)\\
 &=\int_{M}\Bigg(\left(\frac{\beta}{2}\left|\ove H^{q_\sharp V}(q(x,\nu^M(x)))-\nu^M(x)H_0\right|^2+\gamma\, K^{q_\sharp V}(q(x,\nu^M(x))) \right)\theta^+(x)\\
 &+\left(\frac{\beta}{2}\left|\ove H^{q_\sharp V}(q(x,-\nu^M(x)))+\nu^M(x)H_0\right|^2+\gamma\, K^{q_\sharp V}(q(x,-\nu^M(x))) \right)\theta^-(x)\Bigg)\dint \Haus^2(x).
 \end{align*}
Taking into account that $q(x,-\nu^M)=q(x,\nu^M)$, 
$
|\ove H^{q_\sharp V}\mp\nu^MH_0|^2=|\ove H^{q_\sharp V}|^2\mp 2\ove H^{q_\sharp V}\cdot \nu^MH_0+|\nu^M|^2H_0^2
$, 
and $|\nu^M|^2=1$, this reduces to 
\begin{align}
 F_\CH(V)
 &=\int_M\left(\frac{\beta}{2}\left(\left|\ove H^{q_\sharp V}\right|^2+H_0^2\right)+\gamma\, K^{q_\sharp V}\right)(\theta^++\theta^-)\dint\Haus^2
 \nn \\
 &\qquad -\beta\,H_0\int_M \ove H^{q_\sharp V}\cdot \nu^M (\theta^+-\theta^-)\dint\Haus^2,
   \label{eq:FCHfull}
 \end{align}
where the argument $q(\cdot,\nu^M(\cdot))$ of $\ove H^{q_\sharp V}$ and $K^{q_\sharp V}$ has been omitted for conciseness. This representation of the Canham-Helfrich energy allows deducing
another lower bound of $F_\CH$ in terms of the Willmore energy.

\begin{lemma}[Lower bound for the energy]\label{lem:lowerbound}  
Let  $\beta>0$ and $\gamma$, $H_0\in\RR$. Then, for varifolds $V=V[M,\nu^M,\theta^+,\theta^-]\in AV_2^o(\RR^3)$  with mass $\mu_V(\RR^3)=m_0>0$, 
\begin{equation}\label{eq:lowerbound} 
F_\CH(V)\geq 2\beta\Big({\textstyle{\sqrt{\frac{\Will(V)}{m_0}}-\frac{|H_0|}{2}}}\Big)^2m_0+\gamma\int_M K^{q_\sharp V}(\theta^++\theta^-)\dint\Haus^2.
\end{equation}
\end{lemma}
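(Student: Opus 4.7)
The plan is to start directly from the expansion \eqref{eq:FCHfull} that has already been derived just before the statement. The right-hand side has three pieces: the quadratic term in $\bar H^{q_\sharp V}$, the $H_0^2$ mass term, the Gauss term, and a bilinear cross term involving $\bar H^{q_\sharp V}\cdot\nu^M$. The first and second of these will directly produce $2\beta\,\Will(V)$ and $\tfrac{\beta}{2}H_0^2 m_0$ respectively, after recognizing $\tfrac14\int_M|\bar H^{q_\sharp V}|^2(\theta^++\theta^-)\,d\Haus^2=\Will(V)$ (cf.\ \eqref{eq:Will}) and using the mass constraint \eqref{eq:massconstraint}. The Gauss term is kept as-is; it appears unchanged on the right-hand side of \eqref{eq:lowerbound}.

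The cross term is the only one that must actually be estimated. I would control it from above by Cauchy--Schwarz in two steps. First, since $|\theta^+-\theta^-|\le \theta^++\theta^-$ and $|\nu^M|=1$, one has
\begin{equation*}
\Bigl|\int_M \bar H^{q_\sharp V}\!\cdot\nu^M(\theta^+-\theta^-)\,d\Haus^2\Bigr|\le \int_M |\bar H^{q_\sharp V}|(\theta^++\theta^-)\,d\Haus^2.
\end{equation*}
Second, writing $(\theta^++\theta^-)=(\theta^++\theta^-)^{1/2}\cdot(\theta^++\theta^-)^{1/2}$ and applying Cauchy--Schwarz in $L^2_{\Haus^2\llc M}$ yields the bound $2\sqrt{\Will(V)\,m_0}$ after invoking \eqref{eq:Will} and \eqref{eq:massconstraint}. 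Combining, the cross term contributes at least $-2\beta|H_0|\sqrt{\Will(V)\,m_0}$ to $F_\CH(V)$.

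The remainder is a completion of the square. Collecting the three contributions gives
\begin{equation*}
F_\CH(V)\ge 2\beta\,\Will(V)-2\beta|H_0|\sqrt{\Will(V)\,m_0}+\tfrac{\beta}{2}H_0^2 m_0+\gamma\!\int_M K^{q_\sharp V}(\theta^++\theta^-)\,d\Haus^2,
\end{equation*}
and the first three terms can be rewritten as $2\beta m_0\bigl(\sqrt{\Will(V)/m_0}-|H_0|/2\bigr)^2$ by direct expansion, matching \eqref{eq:lowerbound}.

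There is no real obstacle here: the identity \eqref{eq:FCHfull} has already done the heavy lifting of expanding $F_\CH$ on oriented rectifiable representations, and the inequality is a clean Cauchy--Schwarz plus completion of the square. The only mild subtlety is the use of $|\theta^+-\theta^-|\le \theta^++\theta^-$, which is what allows one to pass from the signed weighted integral (controlled by a current-like quantity) to the unsigned weighted integral (controlled by the mass measure $\mu_V$), and hence to express everything in terms of $\Will(V)$ and $m_0$.
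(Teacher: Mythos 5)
Your proposal is correct and follows essentially the same route as the paper's proof: both start from the expansion \eqref{eq:FCHfull}, bound the cross term via $|\theta^+-\theta^-|\le\theta^++\theta^-$ followed by Cauchy--Schwarz against the mass measure to obtain $|H_0|\sqrt{\Will(V)}\sqrt{m_0}$, and finish by completing the square. The only cosmetic difference is that the paper works with the scalar shorthand $H=\ove H^{q_\sharp V}\cdot\nu^M$ while you keep $|\ove H^{q_\sharp V}|$ throughout; the estimates are identical.
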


\begin{proof}
Along the proof, we use the short-hand notation $H=\ove H^{q_\sharp V}\cdot\nu^M$ and omit everywhere the superscript $(\cdot)^{q_\sharp V}$ for brevity. Moving from \eqref{eq:FCHfull} we get
\begin{align*}
 F_\CH(V)
 &=\int_M\left(\frac{\beta}{2}\left(H^2+H_0^2\right)+\gamma K\right)(\theta^++\theta^-)\dint\Haus^2-\beta H_0\int_MH(\theta^+-\theta^-)\dint\Haus^2\\
\nn 
&=2\beta\left(\frac{1}{4}\int_MH^2(\theta^++\theta^-)\dint\Haus^2
-\frac{H_0}{2}\int_MH(\theta^+-\theta^-)\dint\Haus^2
+\frac{H_0^2}{4}m_0\right)\nn\\
& \quad+\gamma\int_M K(\theta^++\theta^-)\dint\Haus^2,\nn
\end{align*}
where we used the mass constraint $\mu_V(\RR^3)=\int_M(\theta^++\theta^-)\dint\Haus^2=m_0$ \eqref{eq:massconstraint} for the second equality. As $\Will(V)=\frac{1}{4}\int_MH^2(\theta^++\theta^-)\dint\Haus^2$ from \eqref{eq:Will} and $\theta^+,\theta^-\geq 0$, the second term in the right-hand side can be estimated as 
\begin{align*}
&\frac{H_0}{2}\int_MH(\theta^+-\theta^-)\dint\Haus^2
\leq |H_0|\int_M\frac{|H|}{2}|\theta^+-\theta^-|\dint\Haus^2
\leq |H_0|\int_M\frac{|H|}{2}(\theta^++\theta^-)\dint\Haus^2\\
& \qquad\leq |H_0|\sqrt{\frac{1}{4}\int_MH^2(\theta^++\theta^-)\dint\Haus^2}
\sqrt{\int_M(\theta^++\theta^-)\dint\Haus^2}
= |H_0|\sqrt{\Will(V)}\sqrt{m_0}\, .
\end{align*}
This gives the lower bound
$$
F_\CH(V) \geq 2\beta\left(\Will(V)-|H_0|\sqrt{\Will(V)}\sqrt{m_0}+\frac{H_0^2}{4}m_0\right)
+\:\gamma\int_M K(\theta^++\theta^-)\dint\Haus^2,
$$
 where the first term  can be rewritten as
$$
\textstyle{ \Will(V)-|H_0|\sqrt{\Will(V)}\sqrt{m_0}+\frac{H_0^2}{4}m_0=\Big(\sqrt{\Will(V)}-\frac{|H_0|}{2}\sqrt{m_0}\Big)^2
=\Big(\sqrt{\frac{\Will(V)}{m_0}}-\frac{|H_0|}{2}\Big)^2m_0}\, ,
$$
which completes the proof.
\end{proof}

\section{Varifold setting} \label{sec:4}

We now turn to the evolutionary case and consider the gradient flow of the Canham-Helfrich energy in the setting of curvature varifolds as in \cite{BLS:20}.   In Subsection \ref{ssec:existence}, we show global existence of a solution   using the GMM approach. We discuss restricted flows in Subsection \ref{ssec:restricted}  and, finally,  provide diameter bounds in Subsection \ref{ssec:diameter}.

To ease notations, the evolution will first be constrained to take place in a given compact set $\comp\sbs\sbs\RR^3$. 
By fixing the mass $m_0>0$ we let
\begin{equation}\label{eq:BV}
\cV:=\left\{V\in\Radon(\RR^3\times \SS^2): \: \mu_V(\RR^3)=V(\RR^3\times \SS^2)=m_0,\:{\rm \supp}(V)\sbs \comp\times\SS^2\right\},
\end{equation}
where we recall that ${\mathcal M}(\RR^3\times\SS^2)$ indicates the set of Radon measures on $\RR^3\times\SS^2$. We prescribe the class $\adm$ of {\it admissible varifolds} by 
\begin{align}\label{eq:admissible}
\adm:=\left\{V\in\cV:   \: V\in AV_2^o(\RR^3),\:\d(q_\sharp V)=0, \: \d\curr{V}=0\right\}. 
\end{align}
The class $\adm$ is closed in $AV_2^o(\RR^3)$ with respect to curvature varifold convergence. Indeed, let $V_n\in\adm$ and $V\in AV_2^o(\RR^3)$  be  such that $V_n\wto^\ast V$ in $AV_2^o(\RR^3)$ as $n\to\infty$. One readily gets that $\mu_V(\RR^3)=m_0$, ${\rm supp}(\mu_V)\sbs \comp$. Moreover, curvature varifold convergence implies that $\d(q_\sharp V_n)\wto^\ast\d(q_\sharp V)$ in $\Radon^3(\RR^3\times G_{2,3})$, which with $\d(q_\sharp V_n)=0$ gives the condition $\d(q_\sharp V)=0$. 
As shown in {\cite[Lemma 4.1; Eq.~(2.4)]{BLS:20}}, the limit $V$ also
satisfies $\d\curr{V}=0$. 

Let us note  that the requirement $\d\curr{V}=0$ in $\adm$ could be relaxed to  $\MM_{\d\curr{V}}(\RR^3)\leq m^c$ for some fixed $m^c\geq 0$, not affecting the analysis. We restrict ourselves to the choice $m^c=0$, however, for it is well-adapted to the modeling of single-phase membranes without boundary.  Moreover, it simplifies the notation and it will be   useful in the case of multiply covered smooth surfaces  (Section \ref{sec:regular}),  where we can take advantage of Federer's Constancy Theorem~
\ref{thm:constancy}.

In order to enforce the evolution in $\adm$, we additionally constrain the energy by defining
\begin{equation*}
{G_\CH}\colon\cV\to (-\infty,\infty],\quad
{G_\CH}(V):=\begin{cases} F_\CH(V),\quad  V\in {\adm}\\ 
\infty, \qquad\quad\:\: \text{else.} \end{cases}
\end{equation*}
We now endow $\cV$ with the Wasserstein metric $W_p$. To this aim, we fix $p \in [1,\infty)$ and follow the construction explained in  Subsection \ref{ssec:Wasserstein} with  $(X,d_X) = (\RR^3 \times \SS^2, d)$ where $d$ is the  metric defined as   
$$
d((x,\nu),(\tilde x,\tilde \nu)):=|x-\tilde x|+|\nu -\tilde\nu|
$$
(other equivalent metrics on $\RR^3 \times \SS^2$ could be considered as well). Note that $W_p(V,\tilde V)<\infty$ for all $V,\,\tilde V\in\cV$, for the supports of $\mu_V$ and $\mu_{\tilde V}$ are contained in the bounded set $\comp$. We have that 
$$
W_p^p( V,\tilde V)=
\left\{
  \begin{array}{ll}
   \displaystyle\min_{\lambda\in\Pi(V,\tilde V)}\displaystyle\int_{(\comp \times \SS^2)^2}(|x-\tilde x|
    +|\nu-\tilde\nu|)^p\dint\lambda((x,\nu),(\tilde x,\tilde\nu)) &\quad \text{for} \ p>1,\\[6mm]
    \sup\left\{{\displaystyle\int_{\comp \times \SS^2} f\,\dint(V-\tilde V):\:f\in\Lip_1(\comp\times \SS^2)}\right\}&\quad \text{for} \ p=1.
  \end{array}
  \right.
  $$
The actual choice of $p$ is irrelevant for the analysis and can be possibly adjusted to different modeling or computational needs.

Note that the Wasserstein metric $W_p(V,\tilde V)$ controls the distance of the varifolds $V$ and $\tilde V$ both in  space position and in orientation. Such strong control is necessary, for two varifolds sharing the same support in $\RR^3$ could still differ in their  orientation. In the more regular case of multiply covered smooth surfaces,  we will be able to consider a weaker metric as well, featuring just the Wasserstein distance of the supports of $\mu_V$ and $\mu_{\tilde V}$ in $\RR^3$, see  Section \ref{sec:regular}.

\subsection{Generalized Minimizing Movements}\label{ssec:existence}

We are now ready to introduce our concept of evolution.  Given an {\it initial state} $V^0\in\cV$ with $G_\CH(V^0)<\infty$  and  a time step $\tau>0$, we recursively define   {\it discrete minimizers} $(V^n_\tau)_{n\in \NN_0}$ by
$V^0_\tau := V^0$ and $V^n_\tau \in {\rm argmin}_\cV\, G_{\CH,\tau}(\cdot;V^{n-1}_\tau)$ for $n \in \NN$, 
where $G_{\CH,\tau}(\cdot;V^{n-1}_\tau)\colon\cV \to (-\infty,\infty]$ is the {\it incremental functional}
\begin{equation}\label{eq:incfunc}
G_{\CH,\tau}(V;V^{n-1}_\tau):= G_\CH(V) + \frac{1}{2 \tau } W_p^2(V,V^{n-1}_\tau).
\end{equation}
Define $\ove V_\tau\colon[0,\infty)\to \cV$ to be the piecewise constant interpolant given by $\ove V_\tau(0):=V^0$ and $\ove V_\tau(t):=V_\tau^n$ for $t\in((n-1)\tau,n\tau]$. The choice of the power $2$ in the Wasserstein term above corresponds to the case of {\it gradient flows}. Replacing $W_p^2(V,V^{n-1}_\tau)$ by $W_p^p(V,V^{n-1}_\tau)$ would require just minor notational adaptations and would correspond to the case of {\it doubly nonlinear flows} instead \cite{AmGiSa:08}. 

We define our evolution notion as follows,  see \cite[Def.\ 2.0.6]{AmGiSa:08}.

\begin{definition}[Generalized Minimizing Movements]\label{def:GMM}
A curve $V\colon[0,\infty)\to\cV$ is  called  a {\em Generalized Minimizing Movement (GMM)  for the varifold Canham-Helfrich flow}  if there exists a sequence $\tau_m \to 0$  as $m\to\infty$,  such that  $\ove V_{\tau_m}(t)\to V(t)$ for all $t\in [0,\infty)$. 
\end{definition}

A curve $V\colon[0,\infty)\to\cV$ is said to be {\it locally $L^2$-absolutely continuous} if there exists $w\in L^2_\loc(0,\infty)$ such that 
\begin{equation}
W_p(V(t_1),V(t_2))\leq\int_{t_1}^{t_2}w(t)\dint t \label{eq:metric_der}
\end{equation}
for all  $0\leq t_1< t_2<\infty$.  In this case, we write $V\in AC^2_\loc([0,\infty);\cV)$. Given a locally $L^2$-absolutely continuous curve $V$, the {\em metric derivative} 
$$
|V'|(t):=\lim_{s\to t}\frac{W_p (V(s),V(t))}{|s-t|}
$$
can be proved to exists for almost every $t \geq 0$ \cite[Ch.\ 1]{AmGiSa:08}. The function $[0,\infty)\ni t  \mapsto |V'|(t)$ belongs to $ L^2_\loc(0,\infty)$ and is minimal among the functions $w$ fulfilling \eqref{eq:metric_der}.

We start by recording a first existence result.
 

\begin{theorem}[Existence of GMMs]\label{thm:existence}
Assume \eqref{eq:betagamma} and let $V^0\in\cA$ with $G_\CH(V^0)<\infty$. Then, there exists a Generalized Minimizing Movement $V\in AC_{\loc}^2([0,\infty);\cV)$ associated to $G_\CH$ starting from $V(0)=V^0$. Moreover, $G_\CH(V(t))\leq G_\CH(V^0)$ and $V(t)\in\cA$ for all $t\geq 0$.
\end{theorem}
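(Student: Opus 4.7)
The plan is to follow the classical Minimizing Movements scheme of De Giorgi and Ambrosio--Gigli--Savar\'e: construct the discrete minimizers $V^n_\tau$ recursively, derive uniform-in-$\tau$ estimates from the telescoped discrete energy-dissipation inequality, and extract a pointwise-in-time subsequential limit as $\tau\to 0$.

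\textbf{Step 1 (Solvability of the incremental problem).} First I would show that, for every $V^{n-1}_\tau\in\adm$, the incremental functional $G_{\CH,\tau}(\cdot;V^{n-1}_\tau)$ attains its minimum on $\cV$. The infimum is finite since $V^{n-1}_\tau$ is itself a competitor, and the functional is bounded below because \eqref{eq:varL2bounds} forces $F_\CH\geq -c_2 m_0$ on $\cV$. For a minimizing sequence $(V_k)\sbs\adm$, uniform bounds on $F_\CH(V_k)$ and \eqref{eq:varL2bounds} yield uniform control of $\|A^{q_\sharp V_k}\|_{L^2_{q_\sharp V_k}}$; together with the fixed mass and $\d(q_\sharp V_k)=0$ this allows me to apply Theorem~\ref{thm:AVmcpt} to $(q_\sharp V_k)$, while the bound on $\|\delta(q_\sharp V_k)\|_\TV$ recalled after Theorem~\ref{thm:IVmcpt}, combined with $\d\curr{V_k}=0$, provides the hypotheses of Theorem~\ref{thm:IVmcpt} for $(V_k)$. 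The closure of $\adm$ noted after \eqref{eq:admissible}, the lower semicontinuity of $F_\CH$ under curvature varifold convergence, and the weak-$\ast$ lower semicontinuity of $W_p^2(\cdot,V^{n-1}_\tau)$ then identify the limit as a minimizer in $\adm$.

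\textbf{Step 2 (A priori estimates).} Iterating the bound $G_\CH(V^n_\tau)+\frac{1}{2\tau}W_p^2(V^n_\tau,V^{n-1}_\tau)\leq G_\CH(V^{n-1}_\tau)$ and telescoping yields
$$
G_\CH(V^n_\tau)+\sum_{k=1}^n\frac{1}{2\tau}W_p^2(V^k_\tau,V^{k-1}_\tau)\leq G_\CH(V^0).
$$
Combined with the uniform lower bound on $G_\CH$, this delivers the energy monotonicity $G_\CH(\ove V_\tau(t))\leq G_\CH(V^0)$ for every $t\geq 0$ and a uniform bound on $\sum_{k\geq 1}\tau^{-1}W_p^2(V^k_\tau,V^{k-1}_\tau)$. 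Cauchy--Schwarz turns the latter into the discrete H\"older-type estimate $W_p(\ove V_\tau(t),\ove V_\tau(s))\leq C\sqrt{t-s+\tau}$ on bounded time intervals, which plays the role of equicontinuity.

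\textbf{Step 3 (Compactness in time and passage to the limit).} For each fixed $t$, the family $\{\ove V_\tau(t)\}_{\tau>0}$ inherits all the hypotheses used in Step 1 (uniform mass $m_0$, uniform bound on $F_\CH$, support in $\comp\times\SS^2$, $\d(q_\sharp\cdot)=0$, $\d\curr{\cdot}=0$), so it is precompact with respect to weak-$\ast$ convergence; since the supports lie in the compact set $\comp\times\SS^2$, this topology is metrized by $W_p$ on $\cV$, cf.\ \cite[Prop.~7.1.5]{AmGiSa:08}. A diagonal extraction over a countable dense set of times, combined with the equicontinuity from Step 2 in the spirit of a refined Ascoli--Arzel\`a argument \cite{AmGiSa:08}, produces $\tau_m\to 0$ and a limit curve $V\colon[0,\infty)\to\cV$ with $\ove V_{\tau_m}(t)\to V(t)$ for every $t\geq 0$. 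Lower semicontinuity of $F_\CH$ and closure of $\adm$ yield $V(t)\in\adm$ and $G_\CH(V(t))\leq G_\CH(V^0)$, while the H\"older bound passes to the limit giving $V\in AC^2_\loc([0,\infty);\cV)$.

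\textbf{Anticipated difficulty.} I expect the most delicate point to be ensuring admissibility at \emph{every} $t$, rather than almost every $t$: this needs the diagonal-on-a-dense-set argument above together with the closure of $\adm$ under curvature varifold convergence, and in particular the preservation of $\d\curr{\cdot}=0$ recalled from \cite[Lemma 4.1]{BLS:20}. All other ingredients are essentially standard once the incremental solvability and the telescoped energy-dissipation inequality are in place.
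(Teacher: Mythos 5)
Your proposal is correct and follows essentially the same route as the paper: the paper verifies exactly the hypotheses you check (completeness of $(\cV,W_p)$, lower semicontinuity of $G_\CH$ via the equivalence of the $W_p$ and weak-$\ast$ topologies together with \cite[Thm.~3.2]{BLS:20}, and compactness of sublevel sets via the curvature bound \eqref{eq:varL2bounds} and Theorems~\ref{thm:IVmcpt}--\ref{thm:AVmcpt}) and then simply cites \cite[Prop.~2.2.3]{AmGiSa:08}, whose internal proof is what your Steps~1--3 unroll by hand. The only cosmetic imprecision is at the very end: the $AC^2_\loc$ regularity does not follow from passing the H\"older bound to the limit (which only gives $1/2$-H\"older continuity) but from the weak $L^2_\loc$ compactness of the piecewise-constant discrete metric derivatives, whose uniform $L^2$ bound you do establish in Step~2.
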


\begin{proof} The functional $G_\CH\colon\cV\to(-\infty,\infty]$ is defined on the complete metric space $(\cV,W_p)$ and has domain $\adm$. By assuming \eqref{eq:betagamma}, namely  $-6\beta/5<\gamma<0$, the Canham-Helfrich integrand $f_\CH(\nu,\cdot)$ is strictly convex for all $\nu\in\SS^2$ and the curvature bound \eqref{eq:varL2bounds}
$$
\|A^{q_\sharp V}\|^2_{L^2_{q_\sharp V}(\RR^3\times G_{2,3})}\leq c_1\left(G_\CH(V)+c_2\,\mu_{V}(\RR^3)\right)
$$
holds. Owing to the fact that the $W_p$-metric topology and the weak-$\ast$ topology of varifolds are equivalent, we deduce from \cite[Thm.\ 3.2]{BLS:20} that $G_\CH$ is lower semicontinuous in $(\cV,W_p)$. On the other hand, Theorems  \ref{thm:IVmcpt} and \ref{thm:AVmcpt} ensure that the sublevel sets of $G_\CH$ are compact. We are hence in the setting of \cite[Prop.\ 2.2.3]{AmGiSa:08} and the existence of a GMM $V\in AC_{\loc}^2([0,\infty);\cV)$ ensues for all  $V^0\in \adm$ with $G_\CH(V^0)<\infty$. 
Moreover, the solution $V$ satisfies the energy inequality 
$$
G_\CH(V(t))+\frac{1}{2}\int_0^t|V'|^2(r)\dint r\leq G_\CH(V^0)
$$
for all $t\geq 0$. 
In particular, we have $G_\CH(V(t))\leq G_\CH(V^0)<\infty$, which in turn entails that $V(t)\in\cA$.
\end{proof}

By their definition, GMMs starting from a minimizer $V^0$ of the Canham-Helfrich functional are stationary, i.e., $V(t)=V^0$ for all $t\geq 0$ whenever $V^0\in\text{argmin}_\cV\, G_\CH$.

\subsection{Restricted flows}\label{ssec:restricted}

In the following, we will be interested in considering GMMs in more restrictive settings. This can be easily accommodated within the above theory by simply constraining the functional $G_\CH$ to a closed subset of the admissible class $\adm$. More precisely, we have the following corollary to Theorem \ref{thm:existence}.

\begin{corollary}[Existence of restricted GMMs]\label{cor:restricted}
Assume \eqref{eq:betagamma} and let $\tilde\adm\sbs\adm$ be closed with respect to the weak-$\ast$ topology. Then, there exists a GMM associated to
\begin{equation*}
  \tilde G_\CH (V):=\left\{
  \begin{array}{ll}
    G_\CH(V)&\quad \text{if} \ V \in \tilde A,\\
    \infty &\quad \text{else}
  \end{array}
\right.
\end{equation*}
starting form $V^0\in\tilde\adm$ with $V(t)\in\tilde\adm$ for all $t\geq 0$.
\end{corollary}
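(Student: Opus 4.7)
The plan is to verify that the abstract GMM machinery invoked in the proof of Theorem~\ref{thm:existence} applies verbatim to the restricted functional $\tilde G_\CH$, once the three ingredients---lower semicontinuity, compactness of sublevel sets, and solvability of each incremental step---are shown to survive the restriction. Recall from Subsection~\ref{ssec:Wasserstein} that the $W_p$-topology on $\cV$ coincides with the weak-$\ast$ topology; the standing hypothesis therefore says that $\tilde\adm$ is $W_p$-closed in $\cV$, which is the only new structural input needed.

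First I would establish lower semicontinuity of $\tilde G_\CH$ on $(\cV,W_p)$. If $V_n\to V$ in $W_p$ with $\sup_n \tilde G_\CH(V_n)<\infty$, then $V_n\in\tilde\adm\subset\adm$ and $G_\CH(V_n)$ is uniformly bounded; closedness of $\tilde\adm$ gives $V\in\tilde\adm$, and lower semicontinuity of $G_\CH$ on $\adm$ (as used in the proof of Theorem~\ref{thm:existence} via \cite[Thm.\ 3.2]{BLS:20}) yields $\tilde G_\CH(V)=G_\CH(V)\leq\liminf_n G_\CH(V_n)=\liminf_n \tilde G_\CH(V_n)$. Consequently the sublevel sets of $\tilde G_\CH$ are intersections of the (already compact) sublevel sets of $G_\CH$ with the closed set $\tilde\adm$, and hence are themselves compact in $(\cV,W_p)$.

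It remains to run the incremental scheme. At stage $n$, the map $V\mapsto\tilde G_\CH(V)+\tfrac{1}{2\tau}W_p^2(V,V^{n-1}_\tau)$ is lower semicontinuous, coercive through the curvature bound \eqref{eq:varL2bounds}, and finite at $V=V^{n-1}_\tau\in\tilde\adm$ by induction; the Direct Method therefore produces a minimizer $V_\tau^n\in\tilde\adm$. Applying \cite[Prop.\ 2.2.3]{AmGiSa:08} then yields a GMM $V\in AC_\loc^2([0,\infty);\cV)$ with $V(0)=V^0$ together with the energy estimate $\tilde G_\CH(V(t))\leq\tilde G_\CH(V^0)<\infty$, which forces $V(t)\in\tilde\adm$ for every $t\geq 0$. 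The only conceptual point, and certainly not a genuine obstacle, is that weak-$\ast$ closedness of $\tilde\adm$ is precisely the property needed so that neither the discrete minimizers nor their pointwise limits escape the constraint set.
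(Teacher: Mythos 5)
Your proposal is correct and follows exactly the route the paper intends: the corollary is stated without a separate proof precisely because, as you verify, weak-$\ast$ (equivalently $W_p$-) closedness of $\tilde\adm$ preserves the lower semicontinuity and sublevel-set compactness established in the proof of Theorem \ref{thm:existence}, so \cite[Prop.\ 2.2.3]{AmGiSa:08} applies verbatim and the energy bound keeps the evolution in $\tilde\adm$. Nothing is missing.
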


The main application of Corollary \ref{cor:restricted} is that of ensuring the existence of GMMs in the regular setting of multiply covered smooth surfaces in Section \ref{sec:regular}. In the remainder of this subsection, we apply the corollary to demonstrate the possibility of including a volume constraint or enforcing symmetry. 


\subsubsection{Volume constraint}\label{ssec:volume}

The modeling of biomembranes usually  requires 
to constrain the minimization of the Canham-Helfrich energy to surfaces with fixed enclosed volume. This can be accomplished in the varifold setting as well.

We introduce the {\it enclosed volume} of an oriented varifold $V\in IV_2^o(\RR^3)$ by
\begin{equation}\label{eq:vol}
\encvol(V):=\frac{1}{3}\int_{\RR^3\times\SS^2}x\cdot\nu\,\dint V(x,\nu).
\end{equation}
This definition is inspired by the smooth setting: if $V=V[M,\nu^M,1,0]$ such that $M=\d E$ for a $C^{1}$ domain $E\sbs\RR^3$ with outer unit normal $\nu^M$, as $\div(x)=3$ the divergence theorem yields 
$$
\encvol(V)
=\frac{1}{3}\int_{M}x\cdot\nu^M(x)\,\dint \Haus^2(x)=\frac{1}{3}\int_E\div (x)\dint x=\int_E\dint x=|E|=:{\rm vol}(M).
$$
To include the volume constraint we restrict the varifold evolution to the set
$$
\tilde A := \{V\in\cA:\: \encvol(V)=v_0\}
$$
for some fixed volume $v_0>0$. In order not to rule out classical smooth solutions by violating the isoperimetric constraint, we require $36\pi v_0^2\leq m_0^3$.  As the functional $V\mapsto \encvol(V)$ from \eqref{eq:vol} is linear, the set $\tilde A$ above is closed with respect to the weak-$\ast$ convergence. Corollary \ref{cor:restricted} thus gives existence of restricted GMMs respecting a volume constraint. 

\subsubsection{Symmetry}

Starting form a symmetric initial configuration $V^0$, it is unclear if GMMs conserve symmetry. The lack of uniqueness of GMMs and the fact that minimizers of the Canham-Helfrich energy are not necessarily spheres suggest that symmetry conservation may not hold.

On the other hand, one can use Corollary \ref{cor:restricted}  to  enforce additionally symmetry by means of a  constraint. Let us firstly discuss the case of {\it reflection symmetry}. Without loss of generality, we assume that we reflect about a plane $P\sbs\RR^3$ through the origin
and that the  compact $\comp\sbs\sbs\RR^3$ in \eqref{eq:BV}  is symmetric with respect to $P$.
The associated reflection mapping
$\psi\colon \RR^3\times\SS^2 \to \RR^3\times\SS^2$  is defined  by 
$$
\psi(x,\nu)=(Sx,S\nu) 
 \qquad\text{with}\qquad S\in O(3),\quad \det S=-1.
$$
The reflection of $V\in\Radon(\RR^3\times\SS^2)$ is the pushforward measure $\psi_\sharp V\in \Radon(\RR^3\times\SS^2)$. As reflection is linear, the class of reflection-symmetric admissible varifolds, 
$$
\tilde \cA := \{V\in\adm:\: \psi_\sharp V=V\},
$$
is weakly-$\ast$ closed in $\adm$. One can hence apply Corollary \ref{cor:restricted} and obtain that, starting from a reflection-symmetric configuration $V_0 \in \tilde\cA$, there exists a reflection-symmetric GMM $V(t)\in \tilde \cA $ for all $t \geq 0$. 

The case of {\it rotational symmetry} around a fixed axis $(x_a,\nu_a)\in \RR^3\times\SS^2$ can be treated analogously. Without loss of generality, we choose $x_a$ to be the $x_3$-axis in $\RR^3$ and $\nu_a$ to be the north pole in $\SS^2$, i.e.,  $\nu_a=(0,0,1)$. Moreover, we assume that $\comp$ is rotationally symmetric around $x_a$. Let $\psi^\alpha\colon \RR^3\times\SS^2 \to \RR^3\times\SS^2$ given by 
$$
\psi^\alpha(x,\nu)=(R^\alpha x,R^\alpha\nu) \qquad\text{with}\qquad R^\alpha\in SO(3)
$$
denote the anticlockwise rotations about an angle $\alpha\in[0,2\pi)$ around the axes $x_a$, $\nu_a$ respectively. A measure $V\in\Radon(\RR^3\times\SS^2)$ is rotationally symmetric if for all $\alpha\in[0,2\pi)$ there exists a rotation $\psi^\alpha$ such that $\psi_\sharp^\alpha V=V$. 
Any rotation $R^\alpha\in SO(3)$ can be represented as composition of two reflections, namely $R^\alpha = S_1\circ S_2$ for $S_i\in O(3)$ with $\det S_i=-1$ ($i=1,2$), so that $\psi^\alpha=\psi_1\circ\psi_2$ for $\psi_i(x,\nu):=(S_ix,S_i\nu)$, and hence the class of rotationally symmetric varifolds,
$$
\tilde \cA:=\{V\in\adm:\: \psi_\sharp^\alpha V=V\:\text{for all}\:\alpha\in[0,2\pi)\},
$$
can be readily checked to be weakly-$\ast$ closed as well. Therefore, Corollary \ref{cor:restricted} applies, showing existence of rotationally symmetric GMMs, starting from a rotationally symmetric $V_0 \in \tilde\cA$.


\subsection{Diameter bounds}\label{ssec:diameter}

Although our notion of Canham-Helfrich flow is rather weak, we can  show that solutions do not degenerate, for the GMMs of Definition \ref{def:GMM} do not shrink to points nor expand indefinitely.

\begin{prop}[Diameter bounds]\label{l:dia}
Assume \eqref{eq:betagamma} and let $t\mapsto V(t)$ be a GMM. Then, there exist constants $c_0,c^0>0$ depending only on the parameters $(V^0,m_0,\beta,\gamma,H_0)$, such that
$$
 c_0\leq\diam(\supp(\mu_{V(t)}))\leq c^0\quad \forall\: t \geq 0.
 $$
\end{prop}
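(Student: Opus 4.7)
The plan is to extract both bounds from the uniform control of the Canham--Helfrich energy along the GMM, converting Willmore-based curvature information into diameter estimates via classical monotonicity tools.

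\textbf{Step 1: uniform Willmore bound.} Theorem~\ref{thm:existence} gives $V(t) \in \adm$ and the energy inequality $G_\CH(V(t)) \leq G_\CH(V^0)$ for all $t \geq 0$. Since assumption~\eqref{eq:betagamma} holds, inequality~\eqref{eq:WillFCHbound} then yields
\begin{equation*}
  4\,\Will(V(t)) \,\leq\, 3c_1\bigl(G_\CH(V^0)+c_2\,m_0\bigr) \,=:\, K \qquad \text{for every } t\geq 0,
\end{equation*}
i.e.\ the closed integral varifold $q_\sharp V(t)\in AV_2(\RR^3)$ (the boundary vanishes since $V(t) \in \adm$ forces $\d(q_\sharp V(t))=0$) has generalized mean curvature uniformly bounded in $L^2_{q_\sharp V(t)}$.

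\textbf{Step 2: upper bound.} Admissibility also enforces $\supp(V(t))\subseteq \comp\times\SS^2$, so that $\supp(\mu_{V(t)})\subseteq\comp$ and we simply set
\begin{equation*}
  \diam\bigl(\supp(\mu_{V(t)})\bigr) \,\leq\, \diam(\comp) \,=:\, c^0,
\end{equation*}
which depends on the data only through the a priori confinement set $\comp$ fixed in~\eqref{eq:BV} (any further intrinsic refinement would require bounding the motion of the connected components of $\supp(V(t))$ via the $W_p$-a.c.\ character of the curve, but this refinement is not needed here).

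\textbf{Step 3: lower bound.} For the lower estimate, fix $t\geq 0$, pick $x_0\in\supp(\mu_{V(t)})$, and set $D := \diam(\supp(\mu_{V(t)}))$, so that $\supp(\mu_{V(t)})\subseteq \ove{B_D(x_0)}$. One then combines the integer rectifiability of $q_\sharp V(t)$, which yields $\Theta(\mu_{q_\sharp V(t)},x_0)\geq 1$, with Simon's monotonicity formula for integer $2$-varifolds with $L^2$ mean curvature and vanishing boundary (see, e.g., \cite{Simon:83,KuSc:12}) to obtain an estimate of the type
\begin{equation*}
  m_0 \,=\, \mu_{V(t)}(\RR^3) \,\leq\, c\bigl(1+\Will(V(t))\bigr)\,D^2 \,\leq\, c(1+K)\,D^2 ,
\end{equation*}
which yields $\diam(\supp(\mu_{V(t)})) \geq \sqrt{m_0/(c(1+K))} =: c_0 > 0$, with $c_0$ depending only on $V^0$, $m_0$, $\beta$, $\gamma$, $H_0$ through $K$. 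The main obstacle is precisely this step: converting the standard monotonicity/density inequality for unoriented integral varifolds into the stated global bound has to be done in the oriented curvature setting at hand, bookkeeping the multiplicities $\theta^++\theta^-$ and the identification of $\ove H^{q_\sharp V(t)}$ with the generalized mean curvature vector that drives the formula; a Li--Yau-style upper bound on $\Theta$ (cf.~\eqref{eq:LiYau}) combined with a covering/area-control argument for $\supp(\mu_{V(t)})$ inside $\ove{B_D(x_0)}$ furnishes the missing control.
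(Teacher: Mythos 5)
Your Step 1 and the overall strategy for the lower bound are in line with the paper, but Step 2 contains a genuine gap: bounding $\diam(\supp(\mu_{V(t)}))$ by $\diam(\comp)$ does not prove the stated result. The proposition asserts that $c^0$ depends \emph{only} on $(V^0,m_0,\beta,\gamma,H_0)$, and $\diam(\comp)$ is not a function of these parameters --- $\comp$ is an auxiliary confinement set fixed a priori in \eqref{eq:BV}. Moreover, the whole point of the upper bound in the paper is that it is \emph{independent} of $\comp$: the remark following the proposition uses exactly this to conclude that the confinement to $\comp$ is superfluous and can be dropped in the sequel. The ``further intrinsic refinement'' you dismiss as unnecessary is in fact the content of the statement. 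The paper obtains the upper bound from the Simon--Topping diameter estimate $\diam(\supp(\mu_V))\leq\frac{2}{\pi}\sqrt{\mu_V(\RR^3)\,\Will(V)}$ (generalized to curvature varifolds without boundary), which combined with \eqref{eq:WillFCHbound} and the energy-decreasing property gives $c^0=\frac{1}{\pi}\sqrt{3\,m_0\,c_1(G_\CH(V^0)+c_2 m_0)}$, depending only on the admissible data.

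For the lower bound, your monotonicity-formula route is workable (letting the outer radius tend to infinity in the monotonicity inequality for a compactly supported integral $2$-varifold with $L^2$ mean curvature and no boundary yields $\mu(B_\sigma)/\sigma^2\leq C\,\Will$ for all $\sigma$, and $\sigma=D$ gives $m_0\leq C\,\Will\, D^2$), but it is heavier than what the paper uses and you leave the key conversion as an acknowledged ``obstacle.'' The paper instead invokes \cite[Lemma 4.12]{BLS:20}, which follows from testing the first variation with the position vector field and gives directly $\diam(\supp(\mu_V))\geq \mu_V(\RR^3)/(\sqrt{\mu_V(\RR^3)\Will(V)}+\|\d(q_\sharp V)\|_{\TV})$; with $\d(q_\sharp V)=0$ and $\mu_V(\RR^3)=m_0$ this immediately yields $c_0=\sqrt{m_0/(\tfrac34 c_1(G_\CH(V^0)+c_2m_0))}$, with no need for density or Li--Yau considerations. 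To repair your proof you should replace Step 2 by the Simon--Topping estimate and either complete the monotonicity argument in Step 3 or substitute the first-variation identity.
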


\begin{proof}
We start from the lower bound. By \cite[Lemma 4.12]{BLS:20}, an oriented curvature varifold $V\in AV_2^o(\RR^3)$ satisfies 
$$
\diam(\supp(\mu_V))\geq\frac{\mu_V(\RR^3)}{\sqrt{\mu_V(\RR^3)\,\Will(V)}+\|\d (q_\sharp V)\|_{\TV}(\RR^3\times G_{2,3})}.
$$
As $V\in\adm$, we have $\mu_V(\RR^3)=m_0$ and $\d(q_\sharp V)=0$, so that the right-hand-side is bounded below by
$\sqrt{\frac{m_0}{\Will(V(t))}}$.
Moreover, $\Will(V(t))\leq \frac{3}{4}\, c_1\left(F_\CH(V(t))+c_2\,m_0\right)$ by \eqref{eq:WillFCHbound}. As a GMM satisfies $V(t)\in\adm$ and $F_\CH(V(t))=G_\CH(V(t))\leq G_\CH(V^0)$ for all $t\geq 0$ by the energy-decreasing property, this implies 
$\Will(V(t))\leq \frac{3}{4}\, c_1\left(G_\CH(V^0)+c_2\,m_0\right)$. Consequently we obtain the lower bound
$$
\diam(\supp(\mu_{V(t)}))\geq\sqrt{\frac{m_0}{\frac{3}{4}\, c_1\left(G_\CH(V^0)+c_2\,m_0\right)}}:=c_0>0.
$$
The starting point for the upper bound is the generalization to curvature varifolds without boundary of the estimate 
$\diam(\supp(\mu_V))\leq\frac{2}{\pi}\sqrt{\mu_V(\RR^3)\,\Will(V)}$, \cite{Simon:93, Topping:98}.
Then, similar arguments as for the lower bound show that
\begin{align*}
\diam(\supp(\mu_{V(t)}))
& \leq \frac{1}{\pi}\sqrt{3\,m_0\, c_1\left(G_\CH(V^0)+c_2\,m_0\right)}=:c^0>0
\end{align*}
for all $t\geq 0$, proving the claim.
\end{proof}

Note in particular that the upper bound $c^0$ is independent of the choice of the bounding compact $\comp$. This proves that restricting to $\comp$ is in fact not necessary. This allows us to omit $\comp$ in the sequel,  i.e., we replace $\cV$ introduced in \eqref{eq:BV} by $\Prob(\RR^3\times\SS^2)$, see Subsection \ref{ssec:Wasserstein}.

\section{Regular setting}\label{sec:regular}

Let us now turn our attention to the case of multiply covered
surfaces, which amounts to constrain the evolution to the class of varifolds with smooth supports in $\RR^3$. Such more regular setting allows for finer results. In fact, one can prove that the multiplicity is preserved along the evolution. On this basis, one has the possibility of considering a weaker setting where the Wasserstein distance between varifolds, which is rather strong, is replaced by the more natural Wasserstein distance between supports
in space.

We start by introducing some notation in Subsection \ref{ssec:mul}. Then, we discuss the reference case of multiply covered spheres in Subsection \ref{ssec:spheres}. In particular, we assess the minimality of multiply covered spheres in dependence of the given  mass constraint   $m_0$ and spontaneous curvature $H_0$. We then present a Li-Yau-like bound on the multiplicity in terms of the Canham-Helfrich energy for general multiply covered surfaces in Subsection \ref{ssec:LiYau}. In order to possibly consider a restricted flow in the spirit of Corollary \ref{cor:restricted} for multiply covered surfaces, we verify closedness
of the corresponding classes in Subsection \ref{ssec:closure}. An argument for multiplicity conservation during the flow is presented in Subsection \ref{ssec:mult}. Eventually, we discuss the case of the weaker evolution setting given by the Wasserstein  distance  between the supports in Subsection \ref{ssec:wassweaker} and we comment on some generalizations in Subsection \ref{ssec:dalphin}.


\subsection{Multiply covered surfaces}\label{ssec:mul}

In the remainder of the paper, we restrict to varifolds $V=V[M,\nu,\theta^+,\theta^-]$ that are supported on closed (i.e., compact without boundary), connected
$C^{1,1}$ surfaces $M\sbs\RR^3$, that is, embedded $C^1$-surfaces $M$ with Lipschitz continuous unit normal vector $\nu:=\nu^M\colon M\to\SS^2$. In this setting, by Rademacher's theorem, $\ove H^{q_\sharp V}$, $|A^{q_\sharp V}|^2$, and $K^{q_\sharp V}$ are defined almost everywhere  on $M$.

By the admissibility condition on the zero boundary current in the Definition \eqref{eq:admissible} of $\cA$, i.e., $\d\curr{V}=0$, Federer's constancy theorem for smoothly supported currents (Theorem \ref{thm:constancy}) entails that the density $\theta^+-\theta^-$ of $\curr{V}$ is constant. This gives $\overline{\theta}_0\in\ZZ$ such that $(\Haus^2\llc M)$-a.e.,
$$
\theta^+-\theta^-=\overline{\theta}_0.
$$
In this setting, the Canham-Helfrich functional \eqref{eq:FCHfull} reduces to
$$
F_\CH(V)=\int_M\left(\frac{\beta}{2}\left(H^2+H_0^2\right)+\gamma K\right)(\theta^++\theta^-)\dint\Haus^2-\beta\,\overline{\theta}_0\,H_0\int_MH\dint\Haus^2.
$$
It follows that $F_\CH(V)$ will be small for large $\overline{\theta}_0$, provided that $H_0\int_MH\dint\Haus^2>0$. This favorable case occurs for example if $H$ does not change sign (i.e.,\ $M$ is mean convex) and $H_0H>0$.
Otherwise, if $H_0 \int_MH\dint\Haus^2<0$, a negative multiplicity $\overline{\theta}_0$ is favored, corresponding to an orientation flip.
 
This suggests to restrict with no loss of generality to multiplicities of the form
$$
\theta^+=\overline{\theta}_0=k\in\NN
\qquad\text{and}\qquad \theta^-=0.
$$
In order to ensure the closure of this more regular class of configurations, we follow \cite{Dalphin:18} and further restrict to surfaces of {\it uniform} $C^{1,1}$ regularity with given genus $g\in\NN_0$. More precisely, fix $L>0$ and a smooth connected closed surface $\Sigma^g$ embedded in $\RR^3$ and with genus $g\in\NN_0$. Then, for $k\in\NN$, we consider the subclass of admissible varifolds given by 
\begin{align}\label{eq:admgk}
\admgk:=\Big\{& V=V[M,\nu,k,0]\in\adm :
  \:\text{$M=\psi(\Sigma^g)$ for an embedding $\psi\in C^{1,1}(\Sigma^g;\RR^3)$ }\nonumber\\
  &\text{with $\|\psi\|_{C^{1,1}(\Sigma^g)}\leq L$, $\:\nu$ is the outer unit normal vector to $M$}\Big\}.
\end{align}
We will also  employ  the notation
\begin{equation}\label{eq:admg}
\admg:=\{V\in\adm:\, \exists\:k\in\NN\:\:\text{such that}\:V\in\admgk\}.
\end{equation}
to indicate multiply covered surfaces of genus $g$.  Recall that a differentiable map $\psi\colon\Sigma\to\RR^3$ is an embedding if it is injective and $\nabla\psi$ has rank two. Moreover, 
$$
\|\psi\|_{C^{1,1}(\Sigma)}:=\sup_{x\in\Sigma}\left(|\psi(x)|+|\nabla\psi(x)|+\sup_{x'\neq x\in\Sigma}\frac{|\nabla\psi(x)-\nabla\psi(x')|}{|x-x'|}\right).
$$
The constraint $\|\psi\|_{C^{1,1}(\Sigma^g)}\leq L$ entails that elements $V=V[M,\nu,k,0]\in\admgk$ have a uniform curvature bound of their spatial supports $M\sbs\RR^3$. Equivalently, the sets $M$ satisfy a uniform ball condition or are sets of positive reach \cite{Federer:59, Dalphin:18}. 

For  $V\in\admgk$ we have that
\begin{align*}
\hspace{-1em}
G_\CH(V)=F_\CH(V)&=k\,E_\CH(M),
\end{align*} 
with
$$
E_\CH(M)=\int_M\left(\frac{\beta}{2}(H-H_0)^2+\gamma K\right)\dint\Haus^2
=\frac{\beta}{2}\int_M(H-H_0)^2\dint\Haus^2+4\pi\gamma (1-g)
$$
by the Gauss-Bonnet theorem. 
Note that the existence of smooth closed surfaces $M$ of fixed
genus $g\in\NN$ minimizing  $E_\CH$  can be proved in some parameter
range \cite{ChVe:13, Dalphin:18, MoSc:20, RuSc:22}.

\subsection{Stationarity of spheres}\label{ssec:spheres}

Before moving on, let us collect some remarks on the case where $M$ is a sphere. We thus restrict to the class $\admzk$ in the following. As GMMs starting from a Canham-Helfrich energy minimizer  are stationary, we investigate
under which conditions we have minimality of multiply covered spheres.

By $\SS_R^2$ we denote a sphere with radius $R>0$, centered at some $x_0\in\RR^3$, and equipped with its outer unit normal $\nu\colon \SS_{R}^2\to\SS^2$. In particular, $\SS_R^2$ has the constant mean curvature $H=-2/{R}$ and the constant Gauss curvature $K=1/{R^2}$. 
A {\it multiply covered sphere} is an oriented varifold of the form  
$V_{\SS^2_R}:=V[\SS^2_R,\nu,\theta_0,0]$ with multiplicity $\theta_0\colon\SS^2_R\to\NN$. Its mass is given by ${\mu_{V_{\SS^2_R}}(\RR^3)=\int_{\SS^2_R}\theta_0\dint\Haus^2=m_0>0}$ and
its energy \eqref{eq:FCH} takes the form
\begin{align}
F_\CH(V_{\SS^2_R})
&=\frac{\beta}{2}\left(-\frac{2}{R}-H_0\right)^2m_0+\frac{\gamma}{R^2}m_0
=2\beta m_0\left(\frac{1}{R}+\frac{H_0}{2}\right)^2+\frac{\gamma}{R^2}m_0\nn\\
&=\left(\frac{2\beta+\gamma}{R^2}+\frac{2\beta H_0}{R}+\frac{\beta H_0^2}{2}\right)m_0
=2\beta m_0\left(\frac{1+\frac{\gamma}{2\beta}}{R^2}+\frac{H_0}{R}+\frac{H_0^2}{4}\right).
\label{eq:FCHspheremult}
\end{align}
In case of a constant multiplicity $\theta_0(x)=k\in\NN$ for a.e.\
$x\in\SS^2_R$, the mass of $V_{\SS^2_R}$ reads
$$
\mu_{V_{\SS^2_R}}(\RR^3)=4\pi k R^2=m_0,
$$
which relates the radius and the multiplicity. Consequently, for fixed mass, the only multiply covered sphere (up to isometries) with constant multiplicity $k\in\NN$ is the {\it $k$-covered sphere}
\begin{equation*}\label{eq:spheremult}
 S_k:=V_{\SS^2_{R_k}}=V[\SS^2_{R_k},\nu,k,0]
\qquad\text{with}\qquad R_k:=\sqrt{\frac{m_0}{4\pi k}}=\frac{R_1}{\sqrt{k}},\qquad R_1:=\sqrt{\frac{m_0}{4\pi}}.
\end{equation*}
Since $\SS_{R_k}^2$ is just a scaling of $\SS^2$, we have
$
S_k\in\admzk,
$
at least for $k$ small with respect to the uniform curvature bound  $L$  in \eqref{eq:admgk}.  The energy of $S_k$  is a function of the multiplicity $k$ alone,  
\begin{equation}\label{eq:FCHk}
F_\CH(S_k)
=2\beta\, m_0\left(\frac{1+\frac{\gamma}{2\beta}}{R_1^2}k+\frac{H_0}{R_1}\sqrt{k}+\frac{H_0^2}{4}\right).
\end{equation}
For $\beta=1/2$, $H_0=0$, and $\gamma=0$, this reduces to the Willmore energy of a $k$-covered sphere,
$\textstyle{\Will(S_k)=\frac{1}{4}\big(\frac{2}{R_k}\big)^2m_0=4\pi
  k}$, which corresponds to equality in the Li-Yau estimate
\eqref{eq:LiYau}.


The following lemma shows that the $k$-covered sphere is the only Canham-Helfrich minimizer among multiply covered surfaces of genus zero, provided that $|H_0|$ is small enough. 

\begin{prop}[Spheres are unique minimizers for fixed multiplicity]\label{lem:multiplesphere} 
Let $\beta>0$, $\gamma\in\RR$, and $k\in\NN$ such that $S_k\in\admzk$. If $H_0\leq 0$ and satisfies 
$$
|H_0|\leq 2/R_{k}=\sqrt{16\pi k/m_0},
$$ 
then $S_k$ uniquely minimizes $F_\CH$ on $\admzk$. The statement is also true if $H_0>0$, if $S_k$ is defined with the inner normal. In case $H_0=0$, $S_k$ is a minimizer independently of its orientation.
\end{prop}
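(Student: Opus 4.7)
The plan is to reduce the claim to a sharp scalar inequality for the quantity $\int_M(H-H_0)^2\dint\Haus^2$ and then derive this inequality from a combination of the Willmore inequality and the Cauchy--Schwarz inequality, exploiting precisely the smallness hypothesis on $H_0$. Fix $V=V[M,\nu,k,0]\in\admzk$ so $\Haus^2(M)=m_0/k=:A=4\pi R_k^2$. By the Gauss--Bonnet theorem for the genus-$0$ surface $M$ one has $\int_M K\dint\Haus^2=4\pi$, and the mass-normalised representation yields
$$
F_\CH(V)-F_\CH(S_k) \;=\; \frac{k\beta}{2}\Big(\int_M (H-H_0)^2\dint\Haus^2 -4\pi(2+H_0 R_k)^2\Big).
$$
It therefore suffices to prove that, for every such $M$ and for $H_0\leq 0$ with $|H_0|\leq 2/R_k$,
\begin{equation}\label{eq:goal}
\int_M(H-H_0)^2\dint\Haus^2 \;\geq\; 4\pi(2+H_0 R_k)^2,
\end{equation}
with equality characterising $M=\SS^2_{R_k}$.

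Setting $h_0:=|H_0|\geq 0$, $\tau:=\int_M H\dint\Haus^2$ and $I:=\int_M H^2\dint\Haus^2$, expand the left-hand side of \eqref{eq:goal} as $I+2h_0\tau+h_0^2 A$. The two fundamental ingredients are the Willmore inequality $I\geq 16\pi$ (with equality only on round spheres, see e.g. \cite{Willmore:1996}) and the Cauchy--Schwarz inequality $\tau^2\leq AI$ (with equality only when $H$ is constant on $M$). I treat the sign of $\tau$ separately: if $\tau\geq 0$, then $I+2h_0\tau\geq I\geq 16\pi\geq 16\pi+16\pi H_0 R_k$ since $H_0\leq 0$; if $\tau<0$, the bound $|\tau|\leq\sqrt{AI}$ gives $I+2h_0\tau\geq f(I):=I-2h_0\sqrt{AI}$, and $f'(I)=1-h_0\sqrt{A/I}$ is nonnegative for $I\geq h_0^2 A$. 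The key observation is that the hypothesis $h_0\leq 2/R_k$ is exactly $h_0^2 A\leq 16\pi$, so $f$ is nondecreasing on $[16\pi,\infty)$ and $f(I)\geq f(16\pi)=16\pi-8h_0\sqrt{\pi A}=16\pi(1-h_0 R_k)=16\pi+16\pi H_0 R_k$. In either case $I+2h_0\tau\geq 16\pi+16\pi H_0 R_k$, and adding $h_0^2 A=4\pi H_0^2 R_k^2$ produces exactly the right-hand side of \eqref{eq:goal}.

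For equality: Willmore saturation forces $M$ to be a round sphere, and the area constraint then forces $M=\SS^2_{R_k}$; since $H\equiv -2/R_k$ with outer normal gives $\tau=-8\pi R_k<0$ and realises equality in the Cauchy--Schwarz step as well, the minimiser is precisely $S_k$ (up to rigid motions, which act trivially on the class). The case $H_0=0$ uses only the Willmore inequality and is orientation-independent, as the formula contains only $H^2$. The case $H_0>0$ follows from the $H_0\leq 0$ case applied with the reversed (inner) normal, which flips $H_0\mapsto -H_0$ in \eqref{eq:FCHfull}, since taking the inner normal amounts to swapping $(\theta^+,\theta^-)=(k,0)$ with $(0,k)$. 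The main subtle point is the monotonicity argument for $f$, which is precisely what the bound $|H_0|\leq 2/R_k$ enables; if this bound fails, $f$ may decrease past $I=16\pi$ and the estimate breaks, consistent with the fact that for larger $|H_0|$ non-spherical minimisers may appear.
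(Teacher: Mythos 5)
Your proof is correct and follows essentially the same route as the paper's: Gauss--Bonnet to dispose of the Gauss term, Cauchy--Schwarz to control the cross term $\int_M H$, and the Willmore/Li--Yau bound combined with the hypothesis $|H_0|\leq 2/R_k$ to guarantee that the resulting expression is monotone in the Willmore energy, so the infimum is attained at the round sphere with rigidity coming from equality in Li--Yau. The only difference is presentational: you re-derive inline, in the scalar variables $I$, $\tau$, $A$, exactly the content of the paper's Lemma \ref{lem:lowerbound} (your function $f(I)=I-2h_0\sqrt{AI}$ is the same expression as $2\beta m_0(\sqrt{\Will(V)/m_0}-|H_0|/2)^2$ after the substitutions $\Will(V)=\tfrac{k}{4}I$, $m_0=kA$), and your case split on the sign of $\tau$ is subsumed in the paper's one-line estimate of that term.
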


\begin{proof}
Let $V \in \admzk$ be arbitrary. Starting  from the lower bound  \eqref{eq:lowerbound} given in Lemma \ref{lem:lowerbound}, 
we aim at deducing a new lower bound that is independent of $V$.
Since $\theta^++\theta^-=k\in\NN$ and $M$ is a closed $C^{1,1}$ surface of genus $g=0$, the Gauss-Bonnet theorem applies:
\begin{equation}\label{eq:Gauss}
\gamma\int_M K^{q_\sharp V}(\theta^++\theta^-)\dint\Haus^2=\gamma\,k\int_M K^{q_\sharp V}\dint\Haus^2=4\pi\,\gamma\,k(1-g)=4\pi\,\gamma\, k.
\end{equation}
Combining the assumption
$\frac{|H_0|}{2}\leq \frac{1}{R_k}= \sqrt{\frac{4\pi k}{m_0}}$ 
with $\sqrt{\frac{\Will(V)}{m_0}}\geq \sqrt{\frac{4\pi k}{m_0}}$, from  the Li-Yau estimate \eqref{eq:LiYau}, shows that 
$\sqrt{\frac{\Will(V)}{m_0}}-\frac{|H_0|}{2}\geq \sqrt{\frac{4\pi k}{m_0}}-\frac{|H_0|}{2}\geq 0$.
With  \eqref{eq:lowerbound} and \eqref{eq:Gauss}, this gives 
\begin{equation*}\label{eq:lowerboundnew}
F_\CH(V)\geq 2\beta m_0\Big({\textstyle{\sqrt{\frac{\Will(V)}{m_0}}-\frac{|H_0|}{2}}}\Big)^2+\gamma\int_M K^{q_\sharp V}(\theta^++\theta^-)\dint\Haus^2
\geq 2\beta m_0\Big({\textstyle{\sqrt{\frac{4\pi k}{m_0}}-\frac{|H_0|}{2}}}\Big)^2+4\pi\,\gamma\,k.
\end{equation*} 
This lower bound is uniquely attained at $S_k$. In fact, by  \eqref{eq:FCHspheremult} and $H_0\leq 0$ we get 
\begin{equation*}
\textstyle{F_\CH(S_k)=2\beta m_0\left(\frac{1}{R_k}+\frac{H_0}{2}\right)^2+\frac{\gamma}{R_k^2}m_0
=2\beta m_0\left(\sqrt{\frac{4\pi k}{m_0}}-\frac{|H_0|}{2}\right)^2+4\pi\,\gamma\,k.}
\end{equation*}
This proves that $S_k$ is the unique minimizer in $\admzk$,  because equality in the Li-Yau estimate \eqref{eq:LiYau}  holds only  for spheres.  Choosing $\nu$ as the inner normal just reverses the sign in front of $H_0$ and the assertion follows as above. Finally, we note that for $H_0=0$, the assumption $|H_0|/2\leq 1/R_k$ is trivially satisfied.
\end{proof}


Proposition \ref{lem:multiplesphere} is an extension  of an argument by \textsc{Dalphin} \cite[Chapter 5]{Dalphin:14} to the multilayered setting. There, also counterexamples to sphere minimality are explicitly constructed in classes of axisymmetric surfaces (cigars or stomatocytes), if the bound on $H_0$ is violated. Of course, the same counterexamples apply  to our  multiply covered case. 

Let us now drop the prescription of the multiplicity while still asking $0\leq -H_0\leq \sqrt{16\pi/m_0}$, so that any minimizer of $F_\CH$ in $\admz$ \eqref{eq:admg} is necessarily a sphere. Our aim is then to determine which sphere is actually the absolute minimizer across multiplicities. We emphasize that due to the possibly nontrivial multiplicity, the Gauss-term \eqref{eq:Gauss}
is variable as well and, unless $\gamma=0$, can not be neglected in the optimization process. We have the following.

\begin{prop}[Optimal spheres]\label{lem:optspheres} 
Let $-2\beta<\gamma\leq 0$, $0\leq -H_0\leq \sqrt{16\pi/m_0}$, and define 
\begin{align}
k_\ast & :=\frac{m_0H_0^2}{16\pi}\,\frac{1}{(1+\frac{\gamma}{2\beta})^2}\,, \label{eq:k}\\
Y_\ast& :=(\sqrt{\lfloor k_\ast \rfloor}-\sqrt{\lceil k_\ast \rceil}\,)\,H_0 - \textstyle{\sqrt{\frac{4\pi}{m_0}}\left(1 + \frac{\gamma}{2\beta} \right)}.\label{eq:Y}
\end{align}
If $k_\ast\in\NN$, then the unique minimizer of $F_\CH$ on $\cA^0$ is given by $S_{k_\ast}$. If $k_\ast\in [0,1]$ then the unique minimizer is $S_1$. If $k_\ast\in(1,\infty)\setminus\NN$, then the unique minimizer is $S_{\lfloor k_\ast \rfloor}$ for  $Y_\ast<0$ and $S_{\lceil k_\ast \rceil}$ for  $Y_\ast>0$. In case $k_\ast\in(1,\infty)\setminus\NN$ and $Y_\ast=0$, the two spheres $S_{\lfloor k_\ast \rfloor}$ and $S_{\lceil k_\ast \rceil}$ are the sole minimizers. 
\end{prop}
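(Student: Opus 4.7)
The strategy is to reduce the minimization over $\admz = \bigcup_{k\in\NN}\admzk$ to a one-variable convex optimization over positive integers. First, for each $k\in\NN$ we have $R_k = R_1/\sqrt{k}\le R_1$, so the hypothesis $|H_0|\le\sqrt{16\pi/m_0}=2/R_1$ yields $|H_0|\le 2/R_k$. Proposition \ref{lem:multiplesphere} therefore applies fiberwise and identifies $S_k$ as the unique minimizer of $F_\CH$ on $\admzk$. Consequently,
$$
\inf_{V\in\admz} F_\CH(V) \;=\; \min_{k\in\NN} F_\CH(S_k),
$$
and every absolute minimizer on $\admz$ is an $S_k$ realizing the right-hand minimum.

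Next, using the explicit expression \eqref{eq:FCHk}, I set $\phi(y):=F_\CH(S_{y^2})$ and view it as a real quadratic in $y=\sqrt{k}\ge 0$,
$$
\phi(y) \;=\; 2\beta m_0\left(\frac{1+\tfrac{\gamma}{2\beta}}{R_1^2}\,y^2 \;+\; \frac{H_0}{R_1}\,y \;+\; \frac{H_0^2}{4}\right).
$$
The assumption $-2\beta<\gamma\le 0$ gives $0<1+\gamma/(2\beta)\le 1$, so $\phi$ is strictly convex. Solving $\phi'(y)=0$ produces the unique real minimizer $y_\ast = -H_0 R_1/(2(1+\gamma/(2\beta)))\ge 0$; squaring and using $R_1^2=m_0/(4\pi)$ recovers exactly $k_\ast$ from \eqref{eq:k}. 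Strict convexity of $\phi$ forces the integer minimizer of $k\mapsto F_\CH(S_k)=\phi(\sqrt k)$ on $\NN$ to lie in $\{\lfloor k_\ast\rfloor,\lceil k_\ast\rceil\}\cap\NN$.

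This settles two cases at once. If $k_\ast\in\NN$, then $S_{k_\ast}$ is the unique minimizer. If $k_\ast\in[0,1]$, then $y_\ast\le 1$ and strict convexity makes $y\mapsto\phi(y)$ strictly increasing on $[1,\infty)$, so the unique integer minimizer is $k=1$. For $k_\ast\in(1,\infty)\setminus\NN$, I directly compare $F_\CH(S_{\lceil k_\ast\rceil})$ with $F_\CH(S_{\lfloor k_\ast\rfloor})$: since $\lceil k_\ast\rceil-\lfloor k_\ast\rfloor=1$, a short algebraic manipulation of \eqref{eq:FCHk} combined with $1/R_1=\sqrt{4\pi/m_0}$ yields
$$
F_\CH(S_{\lceil k_\ast\rceil}) - F_\CH(S_{\lfloor k_\ast\rfloor}) \;=\; -\frac{2\beta m_0}{R_1}\,Y_\ast,
$$
with $Y_\ast$ as in \eqref{eq:Y}. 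Reading the sign of this difference yields, respectively in the cases $Y_\ast<0$, $Y_\ast>0$, $Y_\ast=0$, the unique minimizer $S_{\lfloor k_\ast\rfloor}$, the unique minimizer $S_{\lceil k_\ast\rceil}$, and the two tied minimizers $\{S_{\lfloor k_\ast\rfloor},S_{\lceil k_\ast\rceil}\}$.

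The argument is essentially bookkeeping; the only conceptual step is the fiberwise reduction via Proposition \ref{lem:multiplesphere}, after which strict convexity of $\phi$ handles everything else. No serious obstacle is anticipated, though one must be careful that the hypothesis $|H_0|\le\sqrt{16\pi/m_0}$ really does supply the Li-Yau-type bound $|H_0|\le 2/R_k$ simultaneously for all $k\in\NN$ — which it does, precisely because $R_k$ is non-increasing in $k$.
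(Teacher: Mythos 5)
Your proposal is correct and follows essentially the same route as the paper: reduce to the spheres $S_k$ via Proposition \ref{lem:multiplesphere} (which the paper does in the paragraph preceding the statement), then minimize $k\mapsto F_\CH(S_k)$ over $\NN$ by locating the real minimizer $k_\ast$ and comparing the two neighboring integers via the sign of $Y_\ast$. Your substitution $y=\sqrt{k}$, which turns the objective into an honest strictly convex quadratic, is a minor cosmetic improvement over the paper's direct treatment of $f(r)=ar+b\sqrt{r}+c$ (whose claimed strict convexity degenerates when $H_0=0$, though monotonicity then settles that case anyway).
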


Before giving the proof of the proposition, let us note that the assumption $-2\beta<\gamma\leq 0$ implies $0<1 + \frac{\gamma}{2\beta}\leq 1$. Moreover, under the smallness condition on $|H_0|$ of the statement, one has that $k_* \leq (1+\frac{\gamma}{2\beta})^{-2}$. In particular, the multiplicity of the optimal sphere is bounded above in terms of the parameters $\beta$ and $\gamma$ only. Yet, the determination of the actual minimizing sphere $S_k$ via the sign of $Y_\ast$ does depend on the parameters $m_0$ and $H_0$ as well. 

\begin{proof}[Proof of Proposition \ref{lem:optspheres}]
Looking back to 
\eqref{eq:FCHk},
we have that $k \mapsto F_\CH(S_k)$ is the restriction to $\NN$ of the smooth, strictly convex, and unbounded function  $f\colon (0,\infty)\to \RR$ given by
$$
 f(r) : 
 = 2\beta m_0\left(\frac{1+\frac{\gamma}{2\beta}}{R_1^2}r+\frac{H_0}{R_1}\sqrt{r}+\frac{H_0^2}{4}\right).
$$
As $f$ is uniquely minimized at $r=k_\ast$, one has that $\NN\ni k \mapsto F_\CH(S_k)$ is minimized either in $k=k_\ast$ if  $k_\ast\in\NN$, in $k=1$ if $0\leq k_\ast\leq 1$, and otherwise in $k=\lfloor k_\ast \rfloor$, in $k=\lceil k_\ast \rceil$, or in both of the latter. These assertions follow directly by comparing $F_\CH(S_{\lfloor k_\ast \rfloor})$ and $F_\CH(S_{\lceil k_\ast \rceil})$ via the sign of
$Y_\ast=\frac{R_1}{2\beta m_0}\left(F_\CH(S_{\lfloor k_\ast \rfloor})-F_\CH(S_{\lceil k_\ast \rceil})\right)=\frac{R_1}{2\beta m_0}\left(f(\lfloor k_\ast \rfloor)-f(\lceil k_\ast \rceil)\right)$, which by $R_1=\sqrt{\frac{m_0}{4\pi}}$ and $\lfloor k_\ast \rfloor-\lceil k_\ast \rceil=-1$ takes the form \eqref{eq:Y}. 
\end{proof}

Let us close this subsection by discussing the specific case of the
single-covered sphere. We have the following.

\begin{corollary}[Optimality of the single-covered sphere]\label{cor:una} 
Let $-2\beta<\gamma\leq 0$ and
$$
\textstyle{0\leq -H_0<\sqrt{\frac{4\pi}{m_0}}\left(1 + \frac{\gamma}{2\beta}\right)(1+\sqrt{2}).}
$$
Then, $S_1$ is the unique minimizer of $F_\CH$ on $\cA^0$. 
\end{corollary}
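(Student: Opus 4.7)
The plan is to read Corollary~\ref{cor:una} as a direct specialization of Proposition~\ref{lem:optspheres}: I would show that under the stated smallness assumption on $H_0$ the optimality criterion in that Proposition always selects $k=1$ as the optimal multiplicity. Setting $\alpha:=1+\gamma/(2\beta)\in(0,1]$, the hypothesis reads $-H_0<(1+\sqrt{2})\alpha\sqrt{4\pi/m_0}$.

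First I would square the hypothesis and use the definition \eqref{eq:k} to get
$$
k_\ast=\frac{m_0H_0^2}{16\pi\alpha^2}<\frac{(1+\sqrt{2})^2}{4}=\frac{3+2\sqrt{2}}{4}<2,
$$
so $k_\ast\in[0,2)$. If $k_\ast\in[0,1]$, Proposition~\ref{lem:optspheres} directly yields $S_1$ as the unique minimizer. Otherwise $k_\ast\in(1,2)$, so $\lfloor k_\ast\rfloor=1$ and $\lceil k_\ast\rceil=2$; evaluating \eqref{eq:Y} and writing $-H_0=|H_0|\geq 0$, I would obtain
$$
Y_\ast=(1-\sqrt{2})H_0-\alpha\sqrt{4\pi/m_0}=(\sqrt{2}-1)(-H_0)-\alpha\sqrt{4\pi/m_0}.
$$
Substituting the hypothesis and invoking the algebraic identity $(\sqrt{2}-1)(1+\sqrt{2})=1$ gives
$$
Y_\ast<\bigl((\sqrt{2}-1)(1+\sqrt{2})-1\bigr)\alpha\sqrt{4\pi/m_0}=0,
$$
and Proposition~\ref{lem:optspheres} again selects $S_1$ as the unique minimizer.

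The only genuinely nontrivial point is the identity $(\sqrt{2}-1)(1+\sqrt{2})=1$, which is exactly what makes the constant $1+\sqrt{2}$ appear as a sharp threshold: the Corollary's bound is precisely the borderline condition $Y_\ast=0$ in the regime $k_\ast\in(1,2)$. The small technical nuisance I would have to address is the standing assumption $-H_0\leq\sqrt{16\pi/m_0}$ of Proposition~\ref{lem:optspheres}: this is automatic whenever $(1+\sqrt{2})\alpha\leq 2$, i.e.\ $\alpha\leq 2(\sqrt{2}-1)$; in the complementary subrange the hypothesis still comfortably yields $|H_0|<(1+\sqrt{2})\sqrt{4\pi/m_0}<\sqrt{32\pi/m_0}\leq 2/R_k$ for every $k\geq 2$, so Proposition~\ref{lem:multiplesphere} applies to each $\admzk$ with $k\geq 2$, and the strict convexity of $k\mapsto F_\CH(S_k)$ on $(k_\ast,\infty)$ reduces the comparison to $S_1$ versus $S_2$, where $Y_\ast<0$ closes the argument.
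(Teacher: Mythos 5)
Your main line of argument is exactly the paper's: specialize Proposition~\ref{lem:optspheres} by checking that the hypothesis forces $k_\ast<(1+\sqrt{2})^2/4=(3+2\sqrt 2)/4<2$ and that, when $k_\ast\in(1,2)$, one has $Y_\ast<0$ via $(\sqrt{2}-1)(1+\sqrt{2})=1$. Both computations are correct and coincide with the paper's (whose entire proof consists of the two checks ``$k_\ast<2$ and $Y_\ast<0$'').

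Where you depart from the paper is in worrying about the standing assumption $0\le -H_0\le\sqrt{16\pi/m_0}$ of Proposition~\ref{lem:optspheres} — and you are right to worry: the paper asserts that this is ``readily checked,'' but it fails whenever $1+\frac{\gamma}{2\beta}>2(\sqrt{2}-1)$, since then $(1+\sqrt{2})(1+\frac{\gamma}{2\beta})\sqrt{4\pi/m_0}>2\sqrt{4\pi/m_0}=\sqrt{16\pi/m_0}$. However, your patch for this complementary subrange does not close the gap. The bound $-H_0\le\sqrt{16\pi/m_0}=2/R_1$ is used (through Proposition~\ref{lem:multiplesphere} with $k=1$) precisely to guarantee that the minimizer of $F_\CH$ on $\cA^0_1$ is the sphere $S_1$; you invoke Proposition~\ref{lem:multiplesphere} only for $k\ge 2$ and then compare $S_1$ with $S_2$, which determines the best \emph{sphere} but leaves open whether a non-spherical competitor in $\cA^0_1$ beats $S_1$. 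This is not a removable technicality: once $|H_0|/2>\sqrt{4\pi/m_0}$ the lower bound of Lemma~\ref{lem:lowerbound} combined with $\Will(V)\ge 4\pi$ is no longer minimized at $\Will(V)=4\pi$, and the paper itself recalls Dalphin's non-spherical counterexamples (cigars, stomatocytes) in exactly this regime. So your argument — like the paper's — is complete only under the additional restriction $(1+\sqrt{2})(1+\frac{\gamma}{2\beta})\le 2$, or if the corollary's threshold is replaced by $\min\{(1+\sqrt{2})(1+\frac{\gamma}{2\beta}),\,2\}\sqrt{4\pi/m_0}$.
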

 
 Indeed,  under the condition on $H_0$ one readily checks that $-H_0< \sqrt{16\pi/m_0}$ as well as that $k_\ast$ from \eqref{eq:k} and $Y_\ast$ from \eqref{eq:Y} fulfill $k_\ast 
 <2$ and $Y_\ast<0$, respectively. Hence, the assertion of Corollary \ref{cor:una} follows from Proposition \ref{lem:optspheres}.

\subsection{Li-Yau estimate}\label{ssec:LiYau}
We now derive a multiplicity bound in terms of the Canham-Helfrich energy, reminiscent of the Li-Yau inequality 
$
4 \pi k\leq \Will(V)
$ 
\eqref{eq:LiYau}
for the Willmore energy of an integral varifold 
$V=V[M,k]$. The estimate below can be compared with the recent \cite{RuSc:22}, where some alternative Li-Yau inequality is derived for the case $\gamma=0$.

We restrict to varifolds $V$ in the class $\admgk$ \eqref{eq:admgk}, for we make a crucial use of the Gauss-Bonnet theorem. We have the following.  

\begin{prop}[Multiplicity bound]\label{lem:kmax}
Let $V\in\admgk$, $H_0\in\RR$, $\beta>0$, $\gamma<0$, and $-2\beta<\gamma(1-g)$. Then, there exists a nondecreasing function $\RR\ni F \mapsto \ove k(F) \in \NN $ depending on $\beta$, $\gamma$, and $H_0$, but independent of the genus $g$, such that 
$
k\leq \ove k({F_\CH(V)})
$.
\end{prop}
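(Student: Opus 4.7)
The plan is to combine Lemma \ref{lem:lowerbound} with the Li--Yau inequality \eqref{eq:LiYau} and the Gauss--Bonnet theorem applied to the closed $C^{1,1}$ surface $M$ underlying $V=V[M,\nu,k,0]$, so as to derive a quadratic-type inequality in $\sqrt k$.

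First, since $\theta^++\theta^-=k$ and $\mu_V(\RR^3)=k\,\Haus^2(M)=m_0$, Gauss--Bonnet yields $\gamma\int_M K^{q_\sharp V}(\theta^++\theta^-)\dint\Haus^2=4\pi\gamma k(1-g)$, while Li--Yau gives $\Will(V)\ge 4\pi k$. I would then distinguish two cases. If $k\le m_0H_0^2/(16\pi)$, then $k$ is already bounded by a constant depending only on $m_0$ and $H_0$. Otherwise $\sqrt{4\pi k/m_0}\ge |H_0|/2$, and monotonicity of $s\mapsto(s-|H_0|/2)^2$ on $[|H_0|/2,\infty)$ lets me insert the Li--Yau bound into Lemma \ref{lem:lowerbound}, producing
$$F_\CH(V)\ge 2\beta m_0\bigl(\textstyle{\sqrt{4\pi k/m_0}-|H_0|/2}\bigr)^2+4\pi\gamma k(1-g).$$

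Expanding the square and exploiting the assumption $-2\beta<\gamma(1-g)$ together with the elementary fact that $\gamma(1-g)\ge\gamma$ for $\gamma<0$ and $g\ge 0$, I recover the $g$-independent lower bound
$$F_\CH(V)\ge 4\pi(2\beta+\gamma)k-2\beta|H_0|\sqrt{4\pi m_0}\,\sqrt k+\tfrac{\beta m_0H_0^2}{2},$$
whose right-hand side is a quadratic polynomial in $\sqrt k$ with positive leading coefficient. Solving this inequality for $\sqrt k$ via the quadratic formula yields an explicit nondecreasing function $\phi(F_\CH(V))$ bounding $\sqrt k$ from above and depending only on $\beta,\gamma,H_0,m_0$. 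Taking $\ove k(F)$ to be the ceiling of $\max\bigl(m_0H_0^2/(16\pi),\phi(F)^2\bigr)$ completes the construction.

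The main obstacle is the non-monotonicity of $(\sqrt{\Will(V)/m_0}-|H_0|/2)^2$ as a function of $\Will(V)$ when $H_0\ne 0$: the Li--Yau estimate cannot be inserted naively into Lemma \ref{lem:lowerbound}, which is precisely why the preliminary case distinction at the threshold $m_0H_0^2/(16\pi)$ is necessary. The $g$-independence of $\ove k$ then follows from the observation that the coefficient $2\beta+\gamma(1-g)$ of $k$ is minimized over $g\in\NN_0$ at $g=0$, giving the worst-case value $2\beta+\gamma$, which is positive thanks to $-2\beta<\gamma$.
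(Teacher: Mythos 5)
Your overall strategy --- Lemma \ref{lem:lowerbound} plus Gauss--Bonnet plus the Li--Yau inequality, then solving a quadratic inequality in $\sqrt k$ --- is the same skeleton as the paper's proof, and your device for handling the non-monotonicity of $s\mapsto(s-|H_0|/2)^2$ (the case split at $k\leq m_0H_0^2/(16\pi)$) is a legitimate alternative to the paper's route, which instead keeps the un-completed-square form of the lower bound and applies Young's inequality with a free parameter $\veps$. However, there is a genuine gap in your final step. You replace the coefficient $2\beta+\gamma(1-g)$ of $k$ by its worst case $2\beta+\gamma$ and claim positivity ``thanks to $-2\beta<\gamma$''. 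But the hypothesis of the proposition is $-2\beta<\gamma(1-g)$ for the \emph{given} genus $g$, which coincides with $-2\beta<\gamma$ only when $g=0$. For $g=1$ the hypothesis reads $-2\beta<0$ and is vacuous, and for $g\geq2$ it follows automatically from $\gamma<0$; in both cases $\gamma\leq-2\beta$ is permitted (e.g.\ $\gamma=-3\beta$ with $g=2$). In that regime your quadratic $4\pi(2\beta+\gamma)k-2\beta|H_0|\sqrt{4\pi m_0}\,\sqrt k+\tfrac{\beta m_0H_0^2}{2}$ has non-positive leading coefficient, its infimum over $k$ is $-\infty$, and no upper bound on $k$ follows. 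The inequality $\gamma(1-g)\geq\gamma$ is true, but it discards exactly the positivity you need: for $g\geq 2$ the Gauss--Bonnet term $4\pi\gamma(1-g)k$ is the \emph{only} strictly positive contribution in $k$ and must be kept, not degraded to $4\pi\gamma k<0$.

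The paper avoids this by choosing the Young parameter genus-dependently: for $g\geq2$ it takes $\veps=1$, drops the Willmore term altogether, and bounds $k$ using only $4\pi\gamma(1-g)\geq-4\pi\gamma>0$ together with $F_\CH(V)\geq 0$; for $g=0,1$ it takes $\veps=\frac{1}{4\beta}(2\beta+\gamma(1-g))$. The three resulting bounds $\ove k_0,\ove k_1,\ove k_2$ are then combined into a single $g$-independent maximum. To repair your argument you would need an analogous case distinction on $g$: keep your computation for $g=0$, and for $g\geq1$ retain the term $4\pi\gamma(1-g)k\geq 0$ (indeed $\geq -4\pi\gamma k$ for $g\geq 2$) rather than replacing it by $4\pi\gamma k$. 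As written, your proof establishes the proposition only under the stronger assumption $-2\beta<\gamma$, which holds under \eqref{eq:betagamma} but not under the stated hypothesis for $g\geq1$.
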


The proposition ensures that the multiplicity of any $V\in\admg$ can be bounded in terms of its Canham-Helfrich energy, as long as $0<-2\beta<\gamma(1-g)$. Note that the latter holds for any genus $g\in\NN_0$ if  $-2\beta<\gamma<0$ or \eqref{eq:betagamma} holds. In fact, with \eqref{eq:betagamma}, i.e., $-\frac{6}{5}\beta<\gamma<0$, we also have $-2\beta<\gamma<0$, because $-2\beta=-\frac{5}{3}\frac{6}{5}\beta<\frac{5}{3}\gamma<\gamma<0$. This immediately gives  $-2\beta<\gamma(1-g)$ in the cases $g=0$ and $g=1$, namely $-2\beta<\gamma$ and $-2\beta<0$ respectively. If $g>1$ then $1-g<0$, so that $\gamma<\gamma(1-g)$ for $\gamma<0$. Therefore $-2\beta<\gamma<0$ implies $-2\beta<\gamma(1-g)$ in this case as well.

\begin{proof}[Proof of Proposition \ref{lem:kmax}] By Lemma \ref{lem:lowerbound}, any oriented curvature varifold $V=V[M,\nu,\theta^+,\theta^-]\in AV_2^ o(\RR^3)$ with mass $\mu_V(\RR^3)=m_0$ satisfies
$$
F_\CH(V) \geq 2\beta\left(\Will(V)-|H_0|\sqrt{\Will(V)}\sqrt{m_0}+\frac{H_0^2}{4}m_0\right)
+\:\gamma\int_M K(\theta^++\theta^-)\dint\Haus^2,
$$
where the Gauss curvature $K=K^{q_\sharp V}$ is given by \eqref{eq:K}. For $V\in\admgk$, we have 
$\theta^+=k$, $\theta^-=0$, and $M$ is a $C^{1,1}$ surface, for which Gauss-Bonnet applies:
$
\int_M K(\theta^++\theta^-)\dint\Haus^2=k\int_M K\dint\Haus^2=4\pi (1-g)k
$. By Young's inequality, 
$|H_0|\sqrt{\Will(V)}\sqrt{m_0}\leq\veps\Will(V)+\frac{H_0^2m_0}{4\veps}$
with $\veps>0$. By employing the classical Li-Yau inequality $\Will(V)\geq 4\pi k$, the estimate then gives
\begin{align*}
F_\CH(V) &\geq 2\beta\left((1-\veps)\Will(V)+\left(1-\frac{1}{\veps}\right)\frac{H_0^2m_0}{4}\right)
+\:4\pi\gamma(1-g)k\\
 &\geq 4\pi\left(2\beta(1-\veps)+\gamma(1-g)\right)k+\beta\left(1-\frac{1}{\veps}\right)\frac{H_0^2m_0}{2}.
\end{align*}
Alternatively, one can deduce this lower bound from 
$F_\CH(V)=k E_\CH(M)$ and $k \Haus^2(M)=m_0$, starting form the analogue of Lemma \ref{lem:lowerbound} in the classical case.

Thanks to the assumptions $\beta>0$, $\gamma<0$, and $2\beta+\gamma(1-g)>0$, we can choose 
$$
0<\veps<\frac{1}{2\beta}(2\beta+\gamma(1-g))=1+\frac{\gamma(1-g)}{2\beta}
\begin{cases}=1+\frac{\gamma}{2\beta}, & g=0,\\
=1, & g=1,\\
>1, & g\geq 2,\end{cases}
$$ 
such that $2\beta\veps<2\beta+\gamma(1-g)$. Then
$2\beta(1-\veps)+\gamma(1-g)=2\beta+\gamma(1-g)-2\beta\veps>0$ and we obtain the multiplicity bound
$$
k\leq\frac{F_\CH(V)-\beta\left(1-\frac{1}{\veps}\right)\frac{H_0^2m_0}{2}}{4\pi \left(2\beta(1-\veps)+\gamma(1-g)\right)}.
$$ 
With the choice $\veps=\frac{1}{4\beta}(2\beta+\gamma(1-g))$ for $g=0$ or $1$ and $\veps=1$ for $g\geq 2$
we get
$$
k\leq \begin{cases}
\frac{1}{4\pi}\left(\frac{2}{2\beta+\gamma}F_\CH(V)+\beta\frac{2\beta-\gamma}{(2\beta+\gamma)^2}H_0^2m_0\right)=:\ove k_0, & g=0,\\
\frac{1}{4\pi}\left(\frac{1}{\beta}F_\CH(V)+\frac{H_0^2m_0}{2}\right)=:\ove k_1, & g=1,\\
\frac{F_\CH(V)}{4\pi\,\gamma(1-g)}\leq -\frac{F_\CH(V)}{4\pi\,\gamma}=:\ove k_2, & g\geq 2.\\
\end{cases}
$$ 
The inequality in case $g\geq 2$ follows from $\gamma<0$, by which $\gamma(1-g)\geq -\gamma$, and the fact that $F_\CH(V)\geq 0$ in this case. Thus, the next largest integer of the maximum of these values gives the desired upper multiplicity bound that is increasing in $F_\CH(V)$,
$$
\ove  k({F_\CH(V)}):=\lceil \max\{\ove k_0, \ove k_1,  \ove k_2\}\rceil.
$$
We note that $ \ove k({F_\CH(V)})$ depends on $\beta$, $\gamma$, and $H_0$ but not on the genus $g$.
\end{proof}

\subsection{Closedness of regular classes}\label{ssec:closure}

We now check that Corollary \ref{cor:restricted} can be applied to the subclass of admissible varifolds corresponding to multiply covered smooth surfaces. In particular, this amounts to  proving that the classes $\admg$ \eqref{eq:admg} and $\admgk$ \eqref{eq:admgk} are closed  in $\adm$ \eqref{eq:admissible}.  We have the following.  
\begin{lemma}[Closedness]\label{lem:closed} Let $(V_n)_{n\in\NN}\subset\adm$ and $V\in\adm$ with $V_n\wto^\ast V$ as $n\to\infty$.
\begin{enumerate}
    \item[{\em (i)}] If $(V_n)_{n\in\NN}\subset\admg$ then $V\in\admg$.
    \item[{\em (ii)}] Let $k\in\NN$. If $(V_n)_{n\in\NN}\subset\admgk$ then $V\in\admgk$.
\end{enumerate}
\end{lemma}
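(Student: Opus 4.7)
I would establish (ii) first by a parametric compactness argument, and then derive (i) from (ii) via an a priori bound on the multiplicities.

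For (ii), write $V_n = V[\psi_n(\Sigma^g),\nu_n,k,0]$ with embeddings $\psi_n \in C^{1,1}(\Sigma^g;\RR^3)$ satisfying $\|\psi_n\|_{C^{1,1}(\Sigma^g)} \leq L$. Since the derivatives $\nabla\psi_n$ are uniformly bounded and uniformly Lipschitz, Arzel\`a--Ascoli extracts a subsequence (not relabeled) converging in $C^1(\Sigma^g;\RR^3)$ to some $\psi$, and lower semicontinuity of the $C^{1,1}$-seminorm under $C^1$-limits yields $\psi \in C^{1,1}(\Sigma^g;\RR^3)$ with $\|\psi\|_{C^{1,1}(\Sigma^g)} \leq L$. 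The Jacobian $J_{\psi_n}=\sqrt{\det(\nabla\psi_n^\top\nabla\psi_n)}$ and the unit normal field $\nu_n\circ\psi_n$ built from $\nabla\psi_n$ then converge uniformly to $J_\psi$ and $\nu\circ\psi$. Testing arbitrary $\varphi\in C_c(\RR^3\times\SS^2)$ against the parametric identity $V_n(\varphi)=k\int_{\Sigma^g}\varphi(\psi_n,\nu_n\circ\psi_n)\,J_{\psi_n}\,\dint\Haus^2$ and letting $n\to\infty$ identifies the weak-$\ast$ limit of $V_n$ as $V[\psi(\Sigma^g),\nu,k,0]$, which must coincide by uniqueness with the given limit $V$.

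The main obstacle is to verify that $\psi$ is still an embedding, so that $M:=\psi(\Sigma^g)$ is a genuine $C^{1,1}$ surface of genus $g$ with $\nu$ its outer unit normal. Injectivity of $\psi$ and rank-$2$ of $\nabla\psi$ are not automatically preserved under $C^1$-limits; here I would invoke the framework of Dalphin \cite{Dalphin:18}, where the uniform parametric bound $\|\psi_n\|_{C^{1,1}}\leq L$ is shown to be equivalent to a uniform ball (positive reach) condition on $M_n$ with a radius $r=r(L)>0$. This geometric condition is stable under $C^0$-convergence of the surfaces, excludes self-touching in the limit, and forces $\psi$ to be an embedding of the same genus $g$. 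It follows that $V=V[M,\nu,k,0]\in\admgk$.

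For (i), the strategy is to bound the multiplicities $k_n\in\NN$ of $V_n\in\admg$ uniformly and then apply (ii) along a subsequence. The uniform ball condition makes $M_n$ the boundary of a domain $\Omega_n\subset\RR^3$ containing a ball of radius $r(L)$, so $|\Omega_n|\geq \frac{4}{3}\pi r(L)^3$ and the Euclidean isoperimetric inequality yields $\Haus^2(M_n)\geq 4\pi r(L)^2$. Combined with the mass identity $k_n\,\Haus^2(M_n)=m_0$, this forces $k_n\leq m_0/(4\pi r(L)^2)$ for every $n$. Hence $(k_n)$ takes only finitely many integer values, a subsequence is constant equal to some $k\in\NN$, and (ii) applied along this subsequence yields $V\in\admgk\subset\admg$.
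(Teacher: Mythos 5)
Your proof is correct and reaches the conclusion by a genuinely different route from the paper. For (ii) the paper argues at the level of the sets: it invokes the Blaschke-type compactness of uniformly $C^{1,1}$ (positive-reach) surfaces from Federer and Dalphin to obtain Hausdorff convergence $M_n\to M$ along a subsequence, forms the candidate varifold $V[M,\nu^M,k,0]\in\admgk$, and identifies it with $V$ by uniqueness of weak-$\ast$ limits. You instead work at the level of the parametrizations, extracting a $C^1$-convergent subsequence of the $\psi_n$ by Arzel\`a--Ascoli and passing to the limit in the area-formula representation of $V_n$; this is more explicit about why the limit varifold is what it is (convergence of Jacobians and normals), at the price of having to argue separately that the limit $\psi$ is still an embedding --- which you correctly delegate to the uniform ball condition, i.e.\ exactly the ingredient the paper's set-level compactness also rests on. For (i) the two arguments truly diverge: the paper writes $j_n=m_0/\Haus^2(M_n)$ and uses continuity of the area under the controlled Hausdorff convergence to conclude $j_n\to j$, whereas you first prove the uniform bound $k_n\le m_0/(4\pi r(L)^2)$ from the interior-ball condition together with the isoperimetric inequality, extract a constant-multiplicity subsequence, and reduce to (ii); this is a clean reduction that bypasses the area-continuity step. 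One caveat, which applies to the paper's argument just as much as to yours: a bound $\|\psi\|_{C^{1,1}}\le L$ alone does not control the uniform ball radius from below (a rescaled embedding $\epsilon\psi$ still satisfies the bound but has inradius of order $\epsilon$), so the radius $r(L)$ in your isoperimetric step implicitly also uses the mass constraint, which fixes $\Haus^2(M_n)$ within each class $\admgk$; since the paper asserts and uses the same equivalence with the uniform ball condition, this is not a defect specific to your proposal.
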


\begin{proof}
We first prove (ii). Let $V_n\in\admgk$ and $V\in\adm$ be such that $V_n\wto^\ast V$ as varifolds. By definition, $V_n=V_n[M_n,\nu^{M_n},k,0]$ and by compactness of the uniform $C^{1,1}$ surfaces $M_n$ \cite{Federer:59,Dalphin:18}, there exists a uniform $C^{1,1}$ surface $M$ such that (a not relabeled subsequence satisfies) $M_n\to M$ ($n\to\infty$) in the Hausdorff distance. This gives rise to the varifold 
$$
\tilde V:=\tilde V[M,\nu^{M},k,0]\in\admgk.
$$
By uniqueness of varifold limits $\tilde V$ coincides with $V$ (which also implies that the whole sequence $M_n$ converges). This shows that $V\in\admgk$, completing the proof of (ii).

In order to show (i), we observe that by \eqref{eq:admg},  elements $V_n\in\admg$ are of the form $V_n=V_n[M_n,\nu^{M_n},j_n,0]$ for some $j_n\in\NN$. The mass constraint implies that 
$$
j_n=\frac{m_0}{\Haus^2(M_n)}.
$$
As before, $M_n\to M$ ($n\to\infty$) in the Hausdorff sense. The uniform $C^{1,1}$ regularity implies that oscillations are controlled, and $\Haus^2(M_n)\to\Haus^2(M)$. Therefore, we have
$$
j_n=\frac{m_0}{\Haus^2(M_n)}\to \frac{m_0}{\Haus^2(M)}=:j.
$$
This gives rise to the varifold $\tilde V=\tilde V[M,\nu^{M},j,0]$, which must coincide with $V$, as in the proof of (ii), so $V\in\admg$.
\end{proof}

\subsection{Conservation of multiplicity}\label{ssec:mult}

Corollary \ref{cor:restricted} and Proposition \ref{lem:closed} ensure that, starting from an initial state $V^0\in\admgk$, there exists a restricted GMM  for the varifold Canham-Helfrich flow  in $\admgk$.
In this section, we prove a stronger fact, namely, that multiplicity is conserved by GMMs in $\cA^g$, without the need to be constrained. 


\begin{theorem}[Conservation of multiplicity]\label{thm:mult} 
Assume \eqref{eq:betagamma} and $V^0\in\admgk$ with $G_\CH(V^0)<\infty$. Let $V$ be a GMM restricted to $\admg$. Then $V(t)\in\admgk$ for all $t\geq 0$, i.e., the flow conserves multiplicity.
\end{theorem}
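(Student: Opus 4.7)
The plan is to show that in the discrete variational scheme underlying the GMM, the multiplicity cannot change between successive incremental minimizers when $\tau$ is small. Combined with induction and the closedness of $\admgk$ from Lemma \ref{lem:closed}(ii), this yields the assertion after passage to the limit. By Corollary \ref{cor:restricted} applied to the class $\admg$ (closed by Lemma \ref{lem:closed}(i)), a GMM in $\admg$ is the limit of piecewise-constant interpolants $\ove V_{\tau_m}$ built from incremental minimizers $V^n_\tau\in\admg$ of $G_{\CH,\tau}(\,\cdot\,;V^{n-1}_\tau)$ with $V^0_\tau=V^0$. Comparing with the competitor $V=V^{n-1}_\tau$ in \eqref{eq:incfunc} and using the lower bound $G_\CH\geq -c_2\,m_0$ from \eqref{eq:varL2bounds} yields
$$
W_p^2(V^n_\tau,V^{n-1}_\tau)\;\leq\;2\tau\bigl(G_\CH(V^0)+c_2\,m_0\bigr)\;=:\;\rho(\tau)^2,
$$
so consecutive discrete states are $\rho(\tau)$-close in $W_p$ with $\rho(\tau)\to 0$ as $\tau\to 0$. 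The energy-decreasing property of the scheme gives $G_\CH(V^n_\tau)\leq G_\CH(V^0)$, and Proposition \ref{lem:kmax} therefore confines the multiplicities of all $V^n_\tau$ to the finite set $\{1,\dots,\ove k\}$ with $\ove k=\ove k(G_\CH(V^0))$.

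The heart of the argument is the gap claim
$$
\delta\;:=\;\inf\bigl\{W_p(V,V')\,:\,V\in\admgk,\,V'\in\cA^g_{k'},\,k\neq k',\,G_\CH(V),G_\CH(V')\leq G_\CH(V^0)\bigr\}\;>\;0,
$$
where $k,k'$ range over $\{1,\dots,\ove k\}$. If $\delta=0$, by pigeonhole on the finite multiplicity range there exist fixed $k\neq k'$ and sequences $V_n\in\admgk$, $V'_n\in\cA^g_{k'}$ with $W_p(V_n,V'_n)\to 0$. The curvature bound \eqref{eq:varL2bounds}, the compactness Theorems \ref{thm:IVmcpt} and \ref{thm:AVmcpt}, and the equivalence between the $W_p$- and weak-$*$ topologies recalled in Subsection \ref{ssec:Wasserstein} provide subsequential $W_p$-limits $V_n\to V$ and $V'_n\to V'$. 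By continuity of $W_p$ these limits coincide, $V=V'$. However, Lemma \ref{lem:closed}(ii) forces simultaneously $V\in\admgk$ and $V\in\cA^g_{k'}$, which is impossible for $k\neq k'$.

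Fix then any $\tau$ with $\rho(\tau)<\delta$. Since $V^0_\tau=V^0\in\admgk$, the inequality $W_p(V^1_\tau,V^0_\tau)<\delta$ excludes $V^1_\tau\in\cA^g_{k'}$ for any $k'\neq k$, forcing $V^1_\tau\in\admgk$; iterating gives $V^n_\tau\in\admgk$ for every $n\in\NN$, so that $\ove V_\tau(t)\in\admgk$ for all $t\geq 0$. Since the GMM is generated along a subsequence $\tau_m\to 0$, eventually $\rho(\tau_m)<\delta$, and since $\ove V_{\tau_m}(t)\to V(t)$ in $W_p$, a final application of Lemma \ref{lem:closed}(ii) delivers $V(t)\in\admgk$ for all $t\geq 0$.

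The main obstacle is establishing $\delta>0$ in the second paragraph: its proof requires in concert the curvature-based compactness of Canham-Helfrich sublevels, the closedness of each $\admgk$ under weak-$*$ limits, and the a priori finiteness of the admissible multiplicities furnished by Proposition \ref{lem:kmax}. Without the last ingredient, one could not rule out collapsing pairs of sequences whose multiplicities grow unboundedly; without the closedness of $\admgk$ the limit could escape the multiplicity classes entirely; and without compactness one could not extract limits at all.
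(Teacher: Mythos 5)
Your proof is correct and follows the same outer skeleton as the paper's — induction on the discrete minimizers, the a priori multiplicity bound of Proposition \ref{lem:kmax} to confine all multiplicities to a finite range determined by $G_\CH(V^0)$, and the closedness Lemma \ref{lem:closed}(ii) to pass to the limit $\tau_m\to 0$ — but it substitutes a genuinely different argument for the key step. The paper establishes a standalone quantitative result, Lemma \ref{lem:distance}, asserting a uniform threshold $W_p(V_j,V_k)\geq\alpha(j,k)>0$ for \emph{all} $V_j\in\cA^g_j$, $V_k\in\admgk$ with $j\neq k$, independently of any energy bound; its proof occupies most of the section and proceeds by a three-step geometric construction (an area lower bound for reduced boundaries confined to a tubular neighborhood of $M_k$, a uniform lower bound on the excess area of $M_j$ outside that neighborhood, and a Chebyshev estimate on the optimal transport plan). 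You instead prove only the soft statement that the infimum of $W_p$ over pairs of different multiplicity \emph{within the sublevel set} $\{G_\CH\leq G_\CH(V^0)\}$ is positive, by contradiction: compactness of the sublevel sets (Theorems \ref{thm:IVmcpt} and \ref{thm:AVmcpt} together with lower semicontinuity and the equivalence of $W_p$- and weak-$\ast$ convergence on measures of fixed mass and bounded support) yields a common limit of the two collapsing sequences, which by Lemma \ref{lem:closed}(ii) would have to lie in the disjoint classes $\admgk$ and $\cA^g_{k'}$ simultaneously — impossible since the mass constraint forces $k\,\Haus^2(M)=k'\,\Haus^2(M')=m_0$ with $M=M'$. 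This weaker gap suffices for your induction because the scheme keeps all discrete states in the sublevel set and consecutive states are $O(\sqrt{\tau})$-close in $W_p$. What you lose is the explicit, energy-independent threshold (and with it any quantitative control on how small $\tau$ must be, as well as the lemma's independent interest); what you gain is a much shorter, non-constructive argument that bypasses the entire geometric machinery of the proof of Lemma \ref{lem:distance}.
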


\begin{proof}
Let us start by remarking that, thanks to Proposition \ref{lem:kmax}, any  $V\in\admg$ actually satisfies $V\in\admgk$ for some $k\leq \ove k ({G_\CH(V)})$. Let $V$ be a restricted  GMM in $\admg$ with $V(0)=V^0$. We know by the closedness of $\admg$ from Lemma \ref{lem:closed} (i)  and by Corollary \ref{cor:restricted} that such GMMs exist. Let $(V^n_m)_{m,n\in\NN}\sbs\admg$ be a corresponding sequence of discrete minimizers with time step $ \tau_m>0$ for $m\in\NN$ with $\tau_m\to 0$. Then $G_\CH(V^0)\geq G_\CH(V^n_{m})$ and, since the function $\ove k$ from Proposition \ref{lem:kmax} is nondecreasing, it follows that
$$
\ove k({G_\CH(V^n_{m})})\leq \ove k({G_\CH(V^0)})=:\ove k_0.
$$
Next, we show that $V^n_{m}\in\admgk$ by induction on $n\in\NN$. Assume that $V^{n-1}_{m}\in\admgk$ and let by contradiction $V^n_{m}\in\admg_j$ with $j\neq k$. Lemma \ref{lem:distance} below ensures that  
\begin{equation}\label{eq:use}
W_p(V^{n}_{m},V^{n-1}_{m})\geq\alpha(j,k)>0.
\end{equation}
By defining  $\ove\alpha:=\min_{1\leq j\neq k\leq \ove k_0}\alpha(j,k)>0$ and $f_j:=\inf_{\admg_j}G_\CH$ we hence have
$$
G_\CH(V^0)\geq G_\CH(V^{n-1}_{m})\geq G_\CH(V^n_{m})+\frac{1}{2\tau _{m}}W_p(V^{n}_{m},V^{n-1}_{m})\geq f_j+\frac{\ove\alpha^2}{2\tau _{m}}.
$$
This leads to a contradiction as soon as the time step $\tau_{m}$ is
small enough, namely, if 
$$
\tau _{m}<\frac{\ove\alpha^2}{2(G_\CH(V^0)-f_j)}.
$$
Therefore, all discrete minimizers must satisfy $V^n _{m}\in\admgk$ for $\tau_m$ small enough. Of course, the same holds true for the piecewise constant interpolants $\ove V_{\tau_m}(t)$ for all $t\geq 0$. Finally, the closedness of $\admgk$ from Proposition \ref{lem:closed} (ii)  shows that multiplicity $k$ is preserved when taking the limit $\tau_m\to 0$  as well, i.e.,  $\ove V_{\tau_m}(t)\to V(t)\in\admgk$ for all $t\geq 0$, which
completes the proof. 
\end{proof}

We are hence left with checking the following technical Lemma, which is used in the argument of Theorem \ref{thm:mult} for formula \eqref{eq:use}. In particular, we show a uniform  control from below of the Wasserstein distance between two multiply covered surfaces with different multiplicities.

\begin{lemma}[Distance threshold]\label{lem:distance} For all $j,\,k\in\NN$ with $j \not = k$ there exists $\alpha=\alpha(j,k)>0$ such that 
$$
W_p(V_j,V_k)\geq \alpha
$$ 
for all $V_j\in\admg_j$ and $V_k\in\admgk$.
\end{lemma}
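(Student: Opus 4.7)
\emph{Proof plan.} The natural approach is by contradiction, exploiting the compactness of uniform $C^{1,1}$ surfaces together with the fact that multiplicity and area are rigidly linked by the mass constraint $\mu_V(\RR^3)=m_0$. In fact, for any $V=V[M,\nu^M,k,0]\in\admgk$ the constraint forces $\Haus^2(M)=m_0/k$, so surfaces of different multiplicity necessarily have different area.

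Suppose the lemma fails. Then there are sequences $V_j^n=V_j^n[M_j^n,\nu^{M_j^n},j,0]\in\admg_j$ and $V_k^n=V_k^n[M_k^n,\nu^{M_k^n},k,0]\in\admgk$ with $W_p(V_j^n,V_k^n)\to 0$. Writing $M_j^n=\psi_j^n(\Sigma^g)$ for embeddings $\psi_j^n\in C^{1,1}(\Sigma^g;\RR^3)$ with $\|\psi_j^n\|_{C^{1,1}(\Sigma^g)}\leq L$ (and analogously $\psi_k^n$ for $M_k^n$), the very same Arzel\`a–Ascoli/compactness argument used in the proof of Lemma \ref{lem:closed} (cf.\ \cite{Federer:59,Dalphin:18}) yields, along a subsequence, $C^1$-convergent limits $\psi_j^n\to\psi_j^\ast$ and $\psi_k^n\to\psi_k^\ast$ to uniform $C^{1,1}$ embeddings. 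Consequently the surfaces $M_j^n$, $M_k^n$ converge in Hausdorff distance to limit surfaces $M_j^\ast$, $M_k^\ast$, with convergence of outward unit normals and of $2$-dimensional Hausdorff measure. Passing to the limit in the varifolds gives $V_j^n\wto^\ast V_j^\ast\in\admg_j$ and $V_k^n\wto^\ast V_k^\ast\in\admgk$.

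Since $W_p$ metrizes the weak-$\ast$ topology on $\Prob_p(\RR^3\times\SS^2)$ (Subsection \ref{ssec:Wasserstein}), and the supports of all involved measures are uniformly bounded (by the diameter estimate of Proposition \ref{l:dia}, or directly by the uniform $C^{1,1}$ bound), the assumption $W_p(V_j^n,V_k^n)\to 0$ together with the triangle inequality yields $V_j^\ast=V_k^\ast$. Projecting this identity onto the $\RR^3$-factor gives $\mu_{V_j^\ast}=\mu_{V_k^\ast}$, i.e.\ $j\,\Haus^2\llc M_j^\ast=k\,\Haus^2\llc M_k^\ast$. The supports must therefore coincide, $M_j^\ast=M_k^\ast=:M^\ast$, and evaluating total mass yields $j\,\Haus^2(M^\ast)=k\,\Haus^2(M^\ast)=m_0$. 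Because $\Haus^2(M^\ast)=m_0/j>0$, this forces $j=k$, contradicting the hypothesis $j\neq k$.

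I expect the sole delicate point to be the compactness extraction and the identification of the weak-$\ast$ limits as genuine elements of $\admg_j$ and $\admgk$; this is exactly what the uniform $C^{1,1}$ constraint built into the definition of $\admgk$ was introduced for, and it is precisely the machinery already used in Lemma \ref{lem:closed}. Everything else reduces to the trivial area–multiplicity counting $k\,\Haus^2(M^\ast)=m_0$.
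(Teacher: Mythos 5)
Your argument is correct, but it takes a genuinely different --- and much softer --- route than the paper's. You reduce everything to three ingredients: sequential compactness of the classes $\admg_j$ and $\admgk$ (the uniform $C^{1,1}$ bound giving Hausdorff/varifold convergence of a subsequence, which is exactly the machinery of Lemma \ref{lem:closed}), the fact that $W_p$ metrizes weak-$\ast$ convergence on measures with uniformly bounded supports, and the elementary observation that no single varifold can lie in both classes because the mass constraint ties multiplicity to area. Equivalently, $W_p$ is a continuous, strictly positive function on the compact set $\admg_j\times\admgk$, hence bounded below by a positive constant. The paper instead runs a hands-on quantitative argument: it first shows that any finite-perimeter set whose reduced boundary lies in a $\delta$-tubular neighborhood of $M_k$ has area at least $\Haus^2(M_k)-C_1\delta$ (by projecting onto tangent planes of $M_k$), deduces from $\Haus^2(M_j)<\Haus^2(M_k)$ that a definite amount $\delta_*$ of the area of $M_j$ must leave a fixed tubular neighborhood of $M_k$, and finally converts this excess area into a lower bound on $W_p$ via a Chebyshev estimate on the optimal transport plan. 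What the paper's route buys is an explicit threshold controlled by $m_0/k-m_0/j$ and the $C^{1,1}$ constant $L$, together with a reusable quantitative link between the Wasserstein distance of two varifolds and the area of the portion of one surface lying far from the other; what your route buys is brevity, at the price of being entirely non-constructive. Your argument does hinge on weak-$\ast$ limits of $\admg_j$-sequences being genuine elements of $\admg_j$ (in particular, on Hausdorff limits of uniformly $C^{1,1}$ surfaces carrying convergent normals so that the oriented varifolds converge), but this is precisely the content of Lemma \ref{lem:closed}, which is proved independently of the distance threshold, so no circularity arises.
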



\begin{figure}
  \centering
\pgfdeclareimage[width=70mm]{zzeronb}{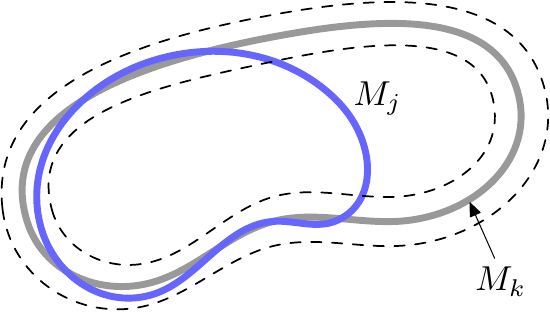}
\pgfuseimage{zzeronb}
  \caption{Illustration of the argument of Lemma \ref{lem:distance} for $j>k$. The surface $M_j$ (blue) must leave the tubular neighbourhood (dashed line) around the surface $M_k$ (grey).}
\label{fig:zzeronb}
\end{figure}

\begin{proof} As the statement is symmetric in $j$ and $k$, with no loss of generality we may assume that $j>k$. We proceed by contradiction and assume that for all $\delta>0$ there exist  $V_j\in\admg_j$ and  $V_k\in\admgk$ with 
\begin{equation}
W_p(V_j,V_k)<\delta^2.\label{eq:condition}
\end{equation}
We show that condition \eqref{eq:condition} leads to a contradiction if $\delta$ is chosen to be small enough. More precisely, we prove that one can find $0<\delta_*<1$ depending only on $k$ and $j$, as well as on the data $m_0$, $L$, and $p$, such that for any $0<\delta<\delta_*$ no varifolds  $V_j\in\admg_j$ and $V_k\in\admgk$  exist such that \eqref{eq:condition} holds.  

Along the proof, we use the notation $V_j=V[M_j,\tilde \nu,j,0]$ and $V_k=V[M_k,\nu,k,0]$ for any varifold in $\admg_j$ and  $\admgk$, respectively. Moreover, we indicate by $C_i >0$, $i=0,\dots, 7$, some constants depending solely on the data, in particular possibly on $L$. To start with, let us note that there exists $C_0>0$ such that any $V \in \admg$ is such that the corresponding spatial support $M$ in $\RR^3$ fulfills the lower bound 
\begin{equation*}
  {\rm vol} (M)\geq C_0, 
\end{equation*}
where  ${\rm vol} (M)$ indicates the volume of the region which is enclosed by $M$, cf.\ Subsection \ref{ssec:volume}.

We subdivide the proof into three steps. At first, we show that any surface which is completely contained in a sufficiently small neighborhood of $M_k$ must have at least the area of $M_k$, up to a small error. Since 
$$
\Haus^2(M_j)<\Haus^2(M_k)
$$ 
by $j>k$ and by the mass constraint,  this implies
that $M_j$ cannot be completely contained in such a small neighborhood of $M_k$, see Figure \ref{fig:zzeronb}. 
In the second step, we give a lower bound on the area of $M_j$ which is not contained in the small neighborhood of $M_k$. Eventually, in Step 3 we show that such a lower bound contradicts condition \eqref{eq:condition}. 

{\it Step 1: A lower bound on the area.} 
We start by checking that one can find $\delta_0>0$ and $C_1>0$ such that   
\begin{align}
   &\forall\, 0< \delta < \delta_0, \ \forall\, V_k\in\admgk,\ \forall\,  E\subset \RR^3\ \text{of finite perimeter with} \, \ |E|\geq C_0 : \nonumber\\
  & \quad  \partial^*E \subset
  M_k^\delta : = M_k+ B_\delta(0) \ \Rightarrow \ 
    \Haus^2(\partial^*E) \geq \Haus^2(M_k) -C_1\delta,  \label{eq:monster}
\end{align}
where we recall the notation for the open ball $ B_\delta(0)=\{x\in\RR^3: \: |x|<\delta\}$.

Let us start by choosing $\delta_1>0 $ so small that the projection $\pi_{M_k}$ onto $M_k$ is well-defined on $M_k^{\delta_1}$, for any $V_k \in \admgk$. As $M_k$ is a set of positive reach, such a $\delta_1$ exists and depends on $L$ only. Then, one can find $C_2>0$ such that 
\begin{equation}\label{eq:the2}
{\rm vol} (M_k^\delta) \leq C_2 \delta \Haus^2(M_k) \quad \forall\, 0<\delta<\delta_1.
\end{equation}
Note that $C_2$ depends on $L$ and that the latter inequality cannot be guaranteed if the uniform $C^{1,1}$ regularity of $M_k$ is dropped.
 Next we choose $0<\delta_0 <\min\{ C_0k/(C_2 m_0),\delta_1\}$, which implies that 
\begin{equation*}
 \partial^*E \subset M_k^{\delta_0}  \ \Rightarrow \ \pi_{M_k}(\partial^*E)\equiv M_k.
\end{equation*}
Indeed, if  $\pi_{M_k}(\partial^*E)\not = M_k$, the fact that $\partial^*E \subset M_k^{\delta_0} $ would imply that the whole $E$ is contained in $M_k^{\delta_0}$. In combination with inequality \eqref{eq:the2} this would require that
$$
C_0 \leq |E| \leq {\rm vol} (M_k^{\delta_0}) \leq C_2 {\delta_0} \Haus^2(M_k) =C_2 {\delta_0} \frac{m_0}{k}<C_0,
$$
which is a contradiction.

Let $\xi\colon U\to \Sigma^g$ be a fixed smooth a.e.\ injective parametrization of $\Sigma^g\subset \RR^3$ with $U=[0,1]^2$. We can cover $U$ up to a negligible set by the union of disjoint open sets $\{U_i\}_{i=1}^{N_\eta}$ on which $\xi$ is injective, each having diameter smaller than some given $\eta>0$, to be fixed below. Note that if $\eta$ is small enough, such a covering can be chosen in such a way that  $N_\eta\leq 2/\eta^2$  (take disjoint open squares of side $\eta$, where we have $N_\eta\leq \lceil 1/\eta  \rceil^2$, hence $N_\eta\leq 2 /\eta^2$ for $0<\eta<(\sqrt{5}-1)/2\:$).  

Given $V_k \in \admgk$ with $M_k = \psi(\Sigma^g)$ as in \eqref{eq:admgk}, define $\phi:=\psi\circ \xi\colon U\to M_k$ and recall that the normal at $x\in M_k$ is
indicated by $\nu(x)$. We let $u_i \in U_i $ be fixed and define $P_i\subset \RR^3$ to be the tangent plane to $M_k$ at the point $x_i =\phi(u_i)$. Moreover, for all $0<\delta<\delta_0$ we define the $\delta$-neighborhood of $\phi(U_i)$ in the normal direction, namely,  
$$
D^{\delta}_i:= \{ \tilde x = x + h\,\nu(x) \in \RR^3: \: x\in
\phi(U_i), \: |h|<\delta \}, 
$$
see Figure~\ref{fig:Mk}. 
\begin{figure}
  \centering
\pgfdeclareimage[width=130mm]{Mkk}{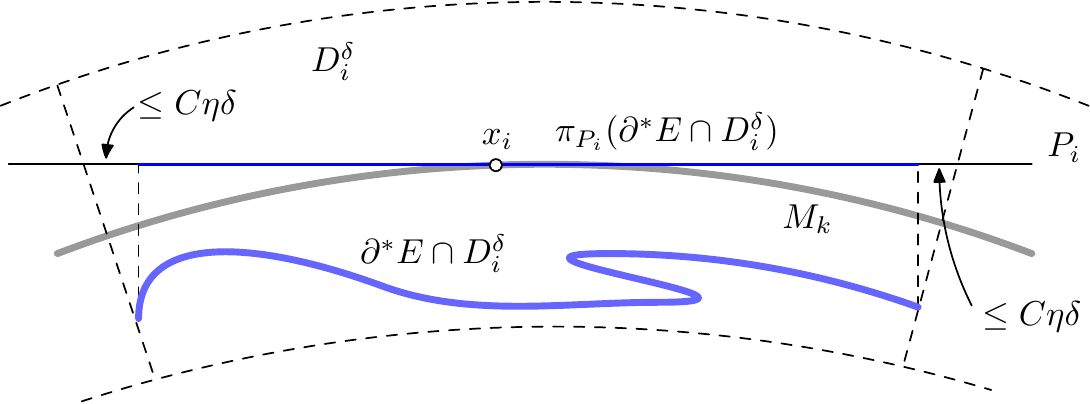}
\pgfuseimage{Mkk}
  \caption{The notation of {\it Step 1} in the proof of Lemma \ref{lem:distance}. }
\label{fig:Mk}
\end{figure}
We observe that 
\begin{equation}
  \label{eq:1}
  \Haus^2( \partial^*E \cap D^\delta_i)  \geq \Haus^2(\pi_{P_i}(\partial^*E \cap D^\delta_i)),
\end{equation}
for orthogonal projections to a plane decrease the perimeter. Next, we bound $\Haus^2(\pi_{P_i}(\partial^*E \cap D^\delta_i))$ by the area of $P_i\cap D^\delta_i$, up to a small error. More precisely, we exploit the Lipschitz continuity of $x \mapsto \nu(x)$ in order to get that 
\begin{equation}\sup_{y \in P_i\cap D^\delta_i}\ \inf_{ x \in \pi_{P_i}(\partial^*E \cap D^\delta_i)} |x-y|\leq C_3\eta\delta,
\label{eq:inq}
\end{equation}
where $C_3>0$ depends on $L$ only. 
By possibly choosing another constant $C_4>0$, again depending on $L$,  we can check that the length of the boundary $\partial ( P_i\cap D^\delta_i )\subset P_i$ is bounded by $C_4\eta$. Inequality \eqref{eq:inq} hence implies that 
\begin{equation}\label{eq:2}
  \Haus^2(\pi_{P_i}(\partial^*E \cap D^\delta_i)) \geq \Haus^2(P_i\cap
  D^\delta_i) - C_5\eta^2\delta,
\end{equation}
where $C_5>0$ is just depending on $L$.
Eventually, one can estimate the area of $P_i \cap D^\delta_i$ via that of $\phi(U_i)$. In fact, the surface $P_i \cap D^\delta_i$ can be parametrized over $U_i$ via 
$$
\tilde \phi(u) = \phi(u) + h(u)\nu(\phi(u))
$$
where the $C^1$ function $h\colon U_i \to (-\delta,\delta)$ is uniquely determined by asking $P_i \cap D^\delta_i = \tilde\phi(U_i)$. Owing to the smoothness of $M_k$ we can show that the infinitesimal area elements on $\phi(U_i)$ and $P_i \cap D^\delta_i$ are comparable.  For this purpose, we  introduce the short-hand notation  
$$ 
a:= \partial_1 \phi \times \partial_2
\phi\colon U_i \to \RR^3\quad \text{and}\quad \tilde a:= \partial_1 \tilde \phi
\times \partial_{ 2}\tilde \phi\colon U_i \to \RR^3.
$$
As by definition
$\tilde a = (\partial_1 \phi +(\partial_1 h) \nu + h (\nabla\nu) \partial_1\phi)\times (\partial_2 \phi +(\partial_2 h)  \nu + h (\nabla\nu) \partial_2\phi)$,
we get  
$$
a\cdot \tilde a = |a|^2+ b \cdot a
$$
for $b:= \partial_1 \phi \times h (\nabla\nu) \partial_2 \phi + h (\nabla\nu) \partial_1\phi \times \partial_2 \phi + h^2 (\nabla\nu) \partial_1 \phi \times
(\nabla\nu) \partial_2\phi$.
We can hence find $C_6>0$ such that $|\tilde a| \leq C_6$ and $|b| \leq C_6\delta$.  As $x \mapsto \frac{a(x)}{|a(x)|}=\nu(x)$ is Lipschitz continuous we get 
$
| \frac{a}{|a|} - \frac{\tilde a}{|\tilde a|}| \leq C_7
\eta.
$
The latter in particular entails that 
$$
C_7 \eta \geq  \left| \left(\frac{a}{|a|} - \frac{\tilde a}{|\tilde a|}\right)\cdot \frac{\tilde a}{|\tilde a|} \right|= \left|\frac{a \cdot \tilde a}{|a|\,|\tilde a|}-1 \right| = \left|\frac{|a|^2+b\cdot a}{|a|\,|\tilde a|}-1 \right|.
$$
By multiplying by $|\tilde a|$ we obtain
$
\left| |a|  - |\tilde a|+ \frac{b\cdot a}{|a|} \right| \leq  C_7 \eta |\tilde a|.
$
Eventually, by using the bounds $|\tilde
a|\leq C_6$ and $|b|\leq C_6 \delta$, we conclude that $$
\big||\partial_1 \phi \times \partial_2 \phi| - |\partial_1 \tilde
\phi \times \partial_{2}\tilde \phi|\big|\leq C_6 \delta + C_6 C_7 \eta.
$$
This implies 
\begin{align}
 \Haus^2(P_i\cap D^\delta_i) &= \int_{U_i}  |\partial_1 \tilde \phi\times \partial_{2}\tilde \phi|\geq \int_{U_i}\big(|\partial_1 \phi\times \partial_2 \phi| - (C_6 \delta + C_6 C_7 \eta)
  \big)\nonumber\\
&=\Haus^2(\phi(U_i))- (C_6 \delta + C_6 C_7 \eta) \Haus^2(U_i).  \label{eq:3}
\end{align}
By combining inequalities \eqref{eq:1}, \eqref{eq:2}-\eqref{eq:3}, and summing for $i=1,\dots,N_\eta$, we get
\begin{align*}
\Haus^2(\partial^*E) &\geq \Haus^2(M_k) -  C_5\eta^2\delta N_\eta- (C_6 \delta + C_6 C_7 \eta)\Haus^2(U)\\
&\geq \Haus^2(M_k) - 2 C_5 \delta - (C_6 \delta + C_6 C_7 \eta).
\end{align*}
It hence suffices to let $\eta = \delta$ and define $C_1:=2C_5+C_6+C_6C_7$ to obtain \eqref{eq:monster}.  

{\it Step 2: Uniform bound of the excess area. }
Given $V_j \in \admg_j$, we apply \eqref{eq:monster} to $E\subset \RR^3 $ with $\partial E =M_j$. Consequently,  if $M_j
\subset M_k^\delta$, then
$$ 
\frac{m_0}{j} = \Haus^2(M_j) \geq \Haus^2(M_k) -  C_1 \delta >\frac{m_0}{k} -  C_1 \delta_0
$$
 is true  for any $0<\delta <\delta_0$. Recalling $j>k$, this leads to a contradiction as soon as 
\begin{equation}\label{eq:deltamo}
   C_1 \delta_0  <\frac{m_0}{k}  - \frac{m_0}{j}. 
\end{equation}
Under the latter assumption, we have hence proved that $M_j \not\subset M_k^\delta $ for any $\delta< \delta_0$ and any $V_j \in \admg_j$ and $V_k \in \admgk$. 
 
We now aim at refining this argument by showing that, for $\delta$ small enough, the area of the portion of $M_j$ which is not contained in $M_k^\delta$ can be uniformly bounded from below. More precisely, given any bounded open set $A\sbs\RR^3$, we show that there exists $0<\delta_* <  \delta_0$ such  
that 
\begin{align}\label{eq:aux}
  &\forall\, V_k \in \admgk, \, \forall\, E \subset \RR^3 \ \text{with} \  {\rm Per} (E;A)\leq \frac{m_0}{j} , \ |E|\geq C_0: \quad {\rm Per} (E;(\overline{M_k^{\delta_*}})^c) \geq \delta_*. 
\end{align}

Statement \eqref{eq:aux} can be checked by contradiction: assume that for all $0<\delta< \delta_0$ there exists $E_\delta\subset\RR^3$ with ${\rm Per} (E_\delta;A)\leq m_0/j$, $ |E_\delta|\geq C_0$, and $ {\rm Per}(E_\delta;(\overline{M_k^{\delta}})^c) <\delta$. By compactness in the class of finite-perimeter sets we find a not relabeled subsequence $(E_\delta)_\delta$  and a finite perimeter set $E\subset\RR^3$ such that,  as $\delta\to 0$,  the characteristic functions $1_{E_\delta}$ converge to $1_E$ in $L^1(\RR^3)$ and the Radon measures $\mu_\delta := -\D 1_{E_\delta}$ fulfill $\mu_\delta \wto^\ast \mu = -\D 1_{E}$. This implies that ${\rm vol}(E)\geq  C_0$ and ${\rm Per}(E;A) \leq m_0/j$, as well. Moreover, for all $0<\overline\delta< \delta_0$ we have that
\begin{align*}
   {\rm Per}(E;(\overline{M_k^{\overline \delta}})^c) \leq \liminf_{\delta \to
  0}   {\rm Per}(E_\delta;(\overline{M_k^{\overline \delta}})^c) \leq \liminf_{\delta \to
  0}   {\rm Per}(E_\delta;(\overline{M_k^{  \delta}})^c) = 0.
\end{align*}
This shows that $\partial^* E \subset M_k^{\overline \delta}$. By applying estimate \eqref{eq:monster} we deduce that $ {\rm Per}(E;M^{\delta_1}_k) \geq \Haus^2(M_k) - C_1 \overline \delta$ which leads to a contradiction as
$$ 
\frac{m_0}{j} \geq {\rm Per}(E; M^{\delta_1}_k) \geq \Haus^2(M_k) - C_1 \overline \delta = \frac{m_0}{k} - C_1 \overline \delta\stackrel{\eqref{eq:deltamo}}{>} \frac{m_0}{j}.
$$
This proves that there exists $0<\delta_*<\delta_1$ such that \eqref{eq:aux} holds. In particular, for all $V_j\in \admg_j$ and $V_k\in \admgk$ we have that
\begin{equation}\label{eq:step2}
  \Haus^2 (M_j \cap  (\overline{M_k^{\delta_* }})^c) \geq \delta_* .
\end{equation}

{\it Step 3: Conclusion of the proof.} Let $V_j\in \admg_j$ and $V_k\in \admgk$ fulfill condition \eqref{eq:condition} for some $\delta <\delta_*$. Denote by $\lambda \in\Pi(V_k,V_j)\sbs\Radon((\RR^3\times\SS^2)^2)$ an optimal transport plan between $V_k$ and $V_j$, i.e.,
$$
W_p^p(V_j,V_k)=\int_{(\RR^3\times\SS^2)^2} d^p((x,\nu),(\tilde x,\tilde \nu))\,\dint\lambda((x,\nu),(\tilde x,\tilde\nu)),
$$
 where we recall that $d((x,\nu),(\tilde x,\tilde \nu))=|x-\tilde x|+|\nu-\tilde\nu|$.  Consider the ${\delta}$-neighborhood $N^\delta$ of the diagonal in
$(\RR^3\times\SS^2)^2$ given by
$$
N^\delta:=\left\{(y,\tilde y)\in (\RR^3\times\SS^2)^2:\:d(y,\tilde y)<{\delta}\right\}.
$$
As $1\leq \delta^{-p}d^p(y,\tilde y)$ for all $(y,\tilde y) \in
(N^\delta)^c$ we can estimate the measure of the complement  $(N^\delta)^c$ by a
 classical Chebyshev-type argument:
\begin{equation}\label{eq:Cheby}
\lambda((N^\delta)^c)=\int_{(N^\delta)^c}\dint\lambda\leq\delta^{-p} \int_{(\RR^3\times\SS^2)^2}d^p(y,\tilde y)\dint\lambda(\tilde y,y)  = \delta^{-p} W_p^p(V_j,V_k)\stackrel{\eqref{eq:condition}}{<}\delta^{p}.
\end{equation}
Next we observe that $\tilde x\in M_j\cap (\overline{M_k^\delta} )^c$ implies $|\tilde x-x|\geq {\delta}$ and thus $((x,\nu(x)),(\tilde x,\tilde\nu (\tilde x))) \in (N^\delta)^c$ for all $x\in M_k$. 
With $Z:=\{((x,\nu (x)),(\tilde x, \tilde \nu (\tilde x)))\in(\RR^3\times\SS^2)^2: \: x \in M_k,\, \tilde x \in M_j\cap (\overline{M_k^\delta}
)^c\} \subset  (N^\delta)^c $ we hence have that  
\begin{equation}\Haus^2(M_j\cap (\overline{M_k^\delta})^c) \leq \lambda(Z) 
  \leq \lambda((N^\delta)^c)\stackrel{\eqref{eq:Cheby}}{<}\delta^p.\label{eq:step3}
\end{equation} 
Eventually, we combine inequalities \eqref{eq:step2} and
\eqref{eq:step3} to get
$$ \delta >\delta^p \stackrel{\eqref{eq:step3}}{ > } \Haus^2(M_j\cap
 (\overline{M_k^\delta})^c)\geq \Haus^2(M_j\cap (\overline{M_k^{\delta_*}})^c)
\stackrel{\eqref{eq:step2}}{\geq}\delta_*>\delta$$
which is a contradiction.  
\end{proof}

\subsection{Existence of GMMs under a weaker metric}\label{ssec:wassweaker}

In the regular case, one can prove the existence of GMMs for the Canham-Helfrich functional under the weaker metric given by the Wasserstein distance of the spatial  supports.
Indeed, let $V^0,V^1\in\Prob(\RR^3 \times \SS^2)$ and set $\mu^0:=\mu_{V^0},\mu^1:=\mu_{V^1}\in\Prob(\RR^3)$. Then, one can prove that  the varifold Wasserstein distance,  $W_p(V^1,V^0)$, controls the Wasserstein distance between  spatial  Radon measures, $\ove W_p(\mu^1,\mu^0)$, in the following sense 
$$
W_p(V^1,V^0)\geq \ove  W_p (\mu^1,\mu^0).
$$
Indeed, with $\lambda\in\Pi(V^1,V^0)$ realizing the minimum in the definition of $W_p(V^1,V^0)$, we get
\begin{align*}
& W_p^p(V^1,V^0)=\int_{(\RR^3\times\SS^2)^2}(|x-\tilde x|+|\nu-\tilde\nu|)^p\dint\lambda((x,\nu),(\tilde x,\tilde\nu))\\
&=\int_{(\RR^3\times\SS^2)^2}|x-\tilde x|^p\dint\lambda((x,\nu),(\tilde x,\tilde\nu))+\underbrace{\sum_{k=1}^p\binom{p}{k}\int_{(\RR^3\times\SS^2)^2}|x-\tilde x|^{p-k}|\nu-\tilde\nu|^{k}\dint\lambda((x,\nu),(\tilde x,\tilde\nu))}_{\geq\: 0}\\
&\geq\int_{\RR^3\times \RR^3}|x-\tilde x|^p\dint(\pi_\sharp^{\RR^3}\lambda)(x,\tilde x)\underbrace{\int_{\SS^2\times\SS^2}\dint(\pi_\sharp^{\SS^2}\lambda)(\nu,\tilde\nu)}_{=\:1}
\geq\ove W_p^p(\mu^1,\mu^0),
\end{align*}
as the $\RR^3\times\SS^2$-marginals $V^0$ and $V^1$ of $\lambda$ are oriented varifolds of the form \eqref{eq:Mthetapm} and the $(\RR^3)^2$-marginal $\pi_\sharp^{\RR^3}\lambda\in\Radon((\RR^3)^2)$ is a possible choice in $\Pi(\mu^1,\mu^0)$.

On the other hand, as soon as multiplicity is  fixed to some $k\in\NN$ and we restrict to the regular varifolds $V^1,V^0\in\admgk$ \eqref{eq:admgk}, which have with fixed orientation,  we have that
\begin{equation}\label{eq:nondeg}
 \ove W_p(\mu^1,\mu^0)=0\:\Longrightarrow\:W_p(V^1,V^0)=0,
 \end{equation}
for $\ove W_p(\mu^1,\mu^0)=0$ in particular implies that $M^0=M^1=:M$. The equality $V^1=V^0$ then follows from $V^0 =V[M ,\nu^{M},k,0]=  V^1$ with $\mu_{V^0}=k(\Haus^2\llc M)$.

The nondegeneracy condition \eqref{eq:nondeg} is instrumental in proving the existence of GMMs, because it qualifies $\ove W_p$ as a distance on $\admgk$. Correspondingly, the existence result from \cite[Prop.~2.2.3]{AmGiSa:08} can still be applied. 
However, note that condition \eqref{eq:nondeg} hinges on the conservation of multiplicity and that the proof of Theorem \ref{thm:mult}, in particular the argument of Lemma \ref{lem:distance}, requires the control of the stronger distance $W_p(V^1,V^0)$. This amounts to say that, effectively, we can resort to the weaker metric $\ove W_p (\mu^1,\mu^0)$ just in the frame of  GMMs restricted a priori
 to $\admg$, namely, multiply covered uniformly regular $C^{1,1}$ surfaces.

\subsection{More general curvature functionals}\label{ssec:dalphin}

Most results of this section can  also be established 
for a larger class of geometric functionals. In particular, we could prove the existence of GMMs for more general curvature energies of the form
$$
D(M):= \int_M f(x,\nu(x),H(x),K(x))\dint\Haus^2(x).
$$
Here, $M$ is a uniformly regular $C^{1,1}$ surface in
$\RR^3$,  as in \eqref{eq:admgk}, and the integrand  $f$ is continuous and convex in its last two arguments. This class of curvature functionals has already been considered in \cite{Dalphin:18} concerning equilibrium problems. In particular, it has been verified there that $D$ admits minimizers in $\cA^g_1$ for any fixed genus $g\in \NN$, also under a volume constraint.

The analysis in \cite{Dalphin:18} hinges upon the compactness of $\cA^g_1$ and on the lower semicontinuity of $D$. These same tools ensure the existence of a GMM for $D$ with
respect to the Wasserstein metric $W_p$.
The adjustments required are mainly notational. The only point deserving some attention is the a priori multiplicity bound in terms of the functional, for a Li-Yau inequality may fail in degenerate cases. Still, a bound on the 
 multiplicity  can be directly deduced from the uniform $C^{1,1}$ regularity, i.e., depending on the constant $L$ in the definition  \eqref{eq:admgk} of $\cA^g_k$.
By assuming such a priori bound on the multiplicity,   the argument of Theorem \ref{thm:mult} still holds. In particular, multiplicity is conserved along the evolution.
Moreover, in the weaker metric setting given by the Wasserstein distance $\ove W_p$ on the spatial  supports, one can still prove existence of a
restricted GMM to $\cA^g_k$ by constraining the multiplicity.

\section*{Acknowledgments}
This work has been partially supported by the Austrian Science Fund (FWF) project F\,65 and by the BMBWF through the OeAD WTZ projects
CZ04/2019 
and
CZ01/2021, 
as well as their Czech counterpart  M\v{S}MT \v{C}R project 8J21AT001. 
K.\ Brazda acknowledges the support by the DFG-FWF international joint project FR 4083/3-1/I 4354 and the FWF project W\,1245. M.\ Kru\v{z}\'{\i}k is indebted to the E.~Schr\"{o}dinger Institute for Mathematics and Physics for its hospitality during his stay in Vienna in 2022. He also acknowledges support  by the GA\v{C}R-FWF project  21-06569K.
U. Stefanelli also acknowledges support from the FWF projects  I\,5149 and P\,32788.

\newcommand{\SortNoop}[1]{}

\bibliographystyle{plain}

\end{document}